\newtheorem{thm}{Theorem}[section]
\newtheorem{cor}[thm]{Corollary}
\newtheorem{lem}[thm]{Lemma}
\newtheorem{prop}[thm]{Proposition}
\theoremstyle{definition}
\theoremstyle{remark}
\newtheorem{rem}[thm]{Remark}
\numberwithin{equation}{section}
\newcommand{\dis}{\displaystyle}
\newcounter{stepnum}
\def\bee{\begin{eqnarray}}
\def\beee{\begin{eqnarray*}}
\def\eee{\end{eqnarray}}
\def\eeee{\end{eqnarray*}}
\def\ba{\begin{array}}
\def\ea{\end{array}}
\def\R{\mathbb R}
\begin{document}

\title[Liouville equation]{The $C^0$-convergence at the Neumann boundary for Liouville equations }

\author[Bi]{Yuchen Bi}
\address{Institute of Mathematics, Academy of Mathematics and Systems Science, University of Chinese Academy of Sciences,
Beijing, 100190, P. R. China}
\email{biyuchen15@mails.ucas.ac.cn}

\author[Li]{Jiayu Li}%
\address{School of Mathematical Sciences, University of Science and Technology of China, Hefei, 230026, P. R. China}%
\email{jiayuli@ustc.edu.cn}%

\author[Liu]{Lei Liu}
\address{School of Mathematics and Statistics \& Hubei Key Laboratory of
Mathematical Sciences, Central China Normal University, Wuhan 430079,
P. R. China}%
\email{leiliu2020@ccnu.edu.cn}

\author[Peng]{Shuangjie Peng}%
\address{School of Mathematics and Statistics \& Hubei Key Laboratory of
Mathematical Sciences, Central China Normal University, Wuhan 430079,
P. R. China}%
\email{sjpeng@ccnu.edu.cn}%

\thanks{}

\thanks{Lei Liu was supported in part by National Natural Science Foundation of China (Grant No. 12101255).}
\subjclass[2010]{}
\keywords{Liouville equation, Neumann boundary, blow up.}

\date{\today}
\begin{abstract}
In this paper, we study the blow-up analysis for a sequence of solutions to the Liouville type equation with exponential Neumann boundary condition. For interior case, i.e. the blow-up point is an interior point, Li \cite{Li} gave a uniform asymptotic estimate. Later, Zhang \cite{Zhang} and Gluck \cite{Gluck} improved Li's estimate in the sense of $C^0$-convergence by using the method of moving planes or classification of solutions of the linearized version of Liouville equation. If the sequence blows up at a boundary point, Bao-Wang-Zhou \cite{Bao-Wang-Zhou} proved a similar asymptotic estimate of Li \cite{Li}. In this paper, we will prove a $C^0$-convergence result in this boundary blow-up process. Our method is different from \cite{Zhang,Gluck} .
\end{abstract}
\maketitle
\section{introduction}

The compactness of a sequence of solutions to a nonlinear equation plays an important role in the study of the existence problem. For most of interesting geometric partial differential equations, one can not  use variational methods directly to get the existence result, such as harmonic maps, minimal surface, Liouville equation and so on. The main reason lies in the lack of compactness of the solution space. To overcome this type  of problem, an important method called blow-up analysis was employed to provide  better information of solution space.

The blow-up analysis for a sequence of solutions to Liouville type equation has been widely studied since the work of Breziz-Merle \cite{Brezis} where a concentration-compactness phenomenon of solutions was revealed. Later, Li-Shafrir \cite{Li-Shafrir} initiated to study the blow-up value at the blow-up point, which is quantized, i.e. at each blow-up point, the blow-up value is $8\pi m$ for some positive integer $m$. We emphasize  that there is no boundary condition in   Li-Shafrir \cite{Li-Shafrir}. If certain  boundary assumption is imposed, for example the oscillation on the boundary is uniformly bounded, then Li \cite{Li} proved that there is only one bubble at each blow-up point, which implies that  the blow-up value is $8\pi$. Roughly speaking, let $u_k$ be a sequence solutions to Liouville equation
\begin{equation}\label{equ:13}
-\Delta u=e^{u},\ \ in\ \  B_1(0)
\end{equation} with uniformly bounded energy
\begin{equation}\label{equ:14}
\int_{B_1(0)}e^{u_k(x)}dx\leq C
 \end{equation}and boundary condition
\begin{equation}\label{equ:15}
osc_{\partial B_1(0)}u_k\leq C.
 \end{equation}
Suppose $x=0$ is the only blow-up point for this sequence. Then Li \cite{Li} proved that there exists a sequence of points $x_k\to 0$ as $k\to\infty$, such that passing to a subsequence, there holds
\begin{equation}\label{ineq:04}
\left\|u_k(x)-u_k(x_k)-v\Big(\frac{x-x_k}{e^{-\frac{1}{2}u_k(x_k)}}\Big)\right\|_{C^0(B_1(0))}\leq C,\ \ \forall \ k,
\end{equation}
where $v(x)=-2\log(1+\frac{1}{8}|x|^2)$. Furthermore, if we assume a stronger boundary condition that $$osc_{\partial B_1(0)}u_k=o(1),$$ Chen-Lin \cite{Chen-Lin}, Zhang \cite{Zhang} and Gluck \cite{Gluck} established a type of  $C^0$-convergence result of \eqref{ineq:04} as follows
\begin{equation}\label{equ:00}
\lim_{k\to\infty}\left\|u_k(x)-u_k(x_k)-v\Big(\frac{x-x_k}{e^{-\frac{1}{2}u_k(x_k)}}\Big)\right\|_{C^0(B_1(0))}=0.
\end{equation}

\

Motivated by the question of prescribed geodesic curvature, it is also interesting to  study the Liouville equation with a Neumann boundary condition. For the blow-up analysis of Liouville type equation near the Neumann boundary, Guo-Liu \cite{Guo-Liu} proved Brezis-Merle type concentration phenomenon and Li-Shafrir type quantization property. Later, Bao-Wang-Zhou \cite{Bao-Wang-Zhou} extended  Li's work \cite{Li} to the boundary case. More precisely, they proved that there is only one bubble near a boundary blow-up point and a similar convergence result of \eqref{ineq:04} also holds near the boundary if  the oscillation on the boundary is uniformly bounded.

\

In this paper, we want to establish a similar $C^0$-convergence of \eqref{equ:00} near a boundary blow-up point. Before stating the main result, we make some notations.

Let $B_r(x_0)$ be the ball in $\R^2$ with radius $r$ centered at  $x_0$. Let $\partial B_r(x_0)$ be the boundary of $B_r(x_0)$. Denote $$B^+_r(x_0):=\{y=(y^1,y^2)\in B_r(x_0)\ |\ y^2> 0\}$$ and $$\partial^+B^+_r(x_0):=\{y=(y^1,y^2)\in \partial B_r(x_0)\ |\ y^2> 0\},\ \ \partial^0B^+_r(x_0):=\{y=(y^1,y^2)\in  B_r(x_0)\ |\ y^2= 0\}.$$ Denote $$\R^2_a:=\{y=(y^1,y^2)\in \R^2\ |\ y^2> -a\}$$ for some $a\geq 0$ and $$\partial \R^2_a:=\{y=(y^1,y^2)\in \R^2\ |\ y^2= -a\}.$$  For simplicity of notations, we always denote $B_1(0)$, $B^+_1(0)$, $B_R(0)$, $B^+_R(0)$ and $\R^2_0$ by $B$, $B^+$, $B_R$, $B^+_R$ and $\R^2_+$ respectively.

\

Now we consider the following equation
\begin{align}\label{equ:01}
\begin{cases}
  -\Delta u=e^{u},\ \ &in\ \ B^+,\\
  \dis\frac{\partial u}{\partial \vec{n}}=e^{\frac{u}{2}},\ \ &on\ \ \partial^0 B^+,
  \end{cases}
  \end{align} where $\vec{n}$ is the unit outer  normal vector on the boundary.

Let $u_k$ be a sequence of solutions of \eqref{equ:01} with uniformly bounded energy
\begin{equation}\label{ineq:01}
\int_{B}e^{u_k}dx+ \int_{\partial^0 B^+}e^{\frac{u_k}{2}}ds_x\leq C<\infty
\end{equation} and  $0$ be its only blow-up point in $B^+_1(0)$, i.e.
\begin{align}\label{ineq:02}
\max_{K\subset\subset\overline{B^+}\setminus \{0\}}u_k\leq C(K),\ \max_{\overline{B^+} }u_k\to +\infty.
\end{align}
Assume the  boundary condition as
\begin{equation}\label{ineq:03}
osc_{\partial^+ B^+}u_k=o(1).\end{equation}

Our main result is as follows
\begin{thm}\label{thm:02}
Let $u_k$ be a sequence of solutions of \eqref{equ:01} with conditions \eqref{ineq:01}, \eqref{ineq:02} and \eqref{ineq:03}. Then there exists a sequence of points $\{x_k\}\subset \overline{B^+}$ such that, passing to a subsequence, there hold:
\begin{itemize}
\item[(1)] $x_k\to 0$ and $u_k(x_k)=\sup_{B^+}u_k(x)\to +\infty$ as $k\to\infty$;

\

\item[(2)] Denote $\lambda_k=e^{-\frac{1}{2}u_k(x_k)}$ and $\Omega_k:=\{x\in\R^2\ |\ x_k+\lambda_kx\in B^+\}$. Then $$\lim_{k\to\infty}\frac{dist(x_k,\partial^0B^+)}{\lambda_k}=a<+\infty$$ and $\Omega_k\to \R^2_a$ as $k\to\infty$.

\

\item[(3)] Denote $v_k(x)=u_k(x_k+\lambda_kx)+2\log\lambda_k$. Then $$\lim_{k\to\infty}\left\| v_k(x)-v(x) \right\|_{C^1(B_R(0)\cap\Omega_k)}=0,\ \ \forall R>0,$$ where $$v(x)=\log\frac{8\lambda^2}{\Big[\lambda^2+(x^1+s_0)^2+\Big(x^2+a+\frac{\lambda}{\sqrt{2}}\Big)^2\Big]^2},\ \ x=(x^1,x^2)\in\R^2$$ for some $\lambda>0$ and $s_0\in\R$, which satisfies $v(0)=0$ and
\begin{align*}
\begin{cases}
-\Delta v=e^v\ \ &in\  \ \R^2_a,\\
\frac{\partial v}{\partial \overrightarrow{n}}=e^{\frac{v}{2}},\ \ &on\ \ \partial\R^2_a.
\end{cases}
\end{align*}

\

\item[(4)] The $C^0$-convergence: $$\lim_{k\to\infty}\left\| u_k(x)-u_k(x_k)-v\Big(\frac{x-x_k}{\lambda_k} \Big) \right\|_{C^0(B^+)}=0.$$
\end{itemize}

\end{thm}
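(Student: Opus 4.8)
The plan is to follow the by-now-standard bubbling scheme adapted to the Neumann boundary, and then upgrade the $C^0_{loc}$ picture to a global $C^0$ estimate by a Pohozaev-type argument controlling the "neck" region. In more detail: for Step (1)-(2), I would first invoke the Brezis-Merle/Guo-Liu concentration analysis together with \eqref{ineq:02} to show that $\max_{\overline{B^+}}u_k\to+\infty$ forces a single concentration point at $0$, pick $x_k$ to realize the supremum (so $u_k(x_k)=\sup_{B^+}u_k\to+\infty$ and $x_k\to0$), set $\lambda_k=e^{-u_k(x_k)/2}$, and study the rescaled functions $v_k(x)=u_k(x_k+\lambda_k x)+2\log\lambda_k$. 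These solve $-\Delta v_k=e^{v_k}$ in $\Omega_k$ with Neumann data $e^{v_k/2}$ on $\partial^0\Omega_k$, satisfy $v_k\le v_k(0)=0$, and have locally bounded energy. The dichotomy $\mathrm{dist}(x_k,\partial^0 B^+)/\lambda_k\to a$: if $a=+\infty$ the rescaled limit lives on all of $\R^2$ and we would get the standard spherical Liouville bubble, but then the energy captured would be $8\pi$, contradicting the boundary contribution forced by \eqref{ineq:01} and the reflection/doubling argument used in \cite{Guo-Liu,Bao-Wang-Zhou}; hence $a<+\infty$ and $\Omega_k\to\R^2_a$.

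For Step (3), with $a<+\infty$ fixed I would apply elliptic estimates up to the (flat) Neumann boundary: $v_k$ is bounded above, $\int e^{v_k}$ is locally bounded, $v_k(0)=0$, so after passing to a subsequence $v_k\to v$ in $C^1_{loc}(\overline{\R^2_a})$ where $v$ solves the limit problem on $\R^2_a$ with finite energy. Here I would cite the classification of finite-energy solutions of the Liouville equation on a half-plane with exponential Neumann condition — this is the Neumann analogue of the Chen-Li / Prajapat-Tarantello classification, and after the reflection $v(y^1,y^2)\mapsto v(y^1,-2a-y^2)$ one reduces to the known classification on $\R^2$; the doubled solution is a standard bubble, and unwinding gives precisely the stated $v(x)=\log\frac{8\lambda^2}{[\lambda^2+(x^1+s_0)^2+(x^2+a+\lambda/\sqrt2)^2]^2}$ with the normalization $v(0)=0$ pinning a relation among $\lambda,s_0,a$. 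The convergence is in $C^1$ on $B_R(0)\cap\Omega_k$ by standard interior-and-boundary Schauder/$W^{2,p}$ bootstrapping since the right-hand sides converge in $C^0_{loc}$.

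Step (4) is the heart of the matter and the main obstacle: upgrading from $C^1_{loc}$ after rescaling to uniform $C^0$ closeness of $u_k(x)-u_k(x_k)$ to $v((x-x_k)/\lambda_k)$ on all of $B^+$, i.e. showing there is no residual oscillation in the neck region $\{\lambda_k R\le |x-x_k|\le r\}$. The strategy I would use, following the spirit of the interior treatment in \cite{Zhang,Gluck} but "different from" theirs as advertised, is a Pohozaev identity on annuli adapted to the half-ball. One shows the total blow-up mass is exactly the mass of the single bubble $v$ (no mass in the neck), then uses a Harnack-type inequality for $-\Delta(u_k-u_k(x_k))=e^{u_k}$ with small Neumann data in the neck together with a sup$-$inf (or "Harnack inequality for the oscillation") estimate to propagate the smallness of $osc_{\partial^+B^+}u_k=o(1)$ inward. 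Concretely: set $w_k=u_k-u_k(x_k)-v((\cdot-x_k)/\lambda_k)$; on the bubble region $|x-x_k|\le \lambda_k R$ it is $o(1)$ by Step (3); on $|x-x_k|\ge r$ one has $u_k\to -\infty$ away from $0$ and $v\big((x-x_k)/\lambda_k\big)=-2\log|x-x_k|^2+2\log\lambda_k+O(1)\to-\infty$ at comparable rates, and the $o(1)$ boundary oscillation plus harmonic comparison controls $w_k$; the crux is the intermediate range, handled by an ODE/capacity comparison for the radial (or half-radial) average combined with a decay estimate of the oscillation of $u_k$ over dyadic half-annuli (the latter coming from the finiteness and near-quantization $\int_{B^+}e^{u_k}+\frac12\int_{\partial^0B^+}e^{u_k/2}\to 4\pi$, so that tails are small). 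Getting the constants in the Pohozaev/Harnack estimates to close — in particular ruling out a logarithmically growing discrepancy in the neck — is where the real work lies, and I would expect the argument to require the precise value of the limiting mass and a careful choice of test vector field respecting the Neumann boundary (e.g. $(x-x_k)\cdot\nabla u_k$ with the tangential-normal splitting on $\partial^0B^+$).
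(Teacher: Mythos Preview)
Your outline for Steps (1)--(3) is correct except for one slip: the reflection you propose for the classification in Step (3) does not work. The Neumann condition is $\partial v/\partial\vec n=e^{v/2}$, not zero, so an even reflection across $\partial\R^2_a$ produces a function whose distributional Laplacian carries an extra singular measure $2e^{v/2}$ along $\partial\R^2_a$; you do not land in the Chen--Li setting on $\R^2$. The paper (and you should) simply cite the Li--Zhu half-plane classification \cite{Li-Zhu} directly.

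For Step (4) your skeleton---Pohozaev identity, an ODE for the half-radial average, oscillation control on half-circles---is indeed the paper's, and your mentions of Harnack/sup--inf in the style of \cite{Zhang,Gluck} are red herrings (the paper's point is precisely that its method is \emph{different} from those). The genuine gap in your sketch is this: to rule out a logarithmic drift in the neck you need a \emph{quantitative} rate $\alpha(t)=4+O(t^{-p})$ for some $p>0$, not merely $\alpha(t)\to 4$; only then is $(\alpha(r)-4)/r$ integrable on $[R_k,\lambda_k^{-1}]$ and the radial ODE closes to $o(1)$. The paper obtains this rate by first proving a pointwise estimate for $\nabla\tilde v_k$ via the Green representation on the disk (after even extension), and the obstacle you did not foresee is that this estimate contains a term $|\log\operatorname{dist}(x,\partial\R^2_+)|/|x|^{3/2}$, which blows up as $x$ approaches the flat boundary even when $|x|$ is large. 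The paper's key observation is that this bad term is nevertheless integrable along the half-circle $\partial^+B^+_t$ (the integral is $O(t\log t)$), giving tangential oscillation $o(1)$ (Corollary~\ref{cor:01}); feeding the same gradient estimate into the Pohozaev identity on $\partial^+B^+_t$ then yields the needed decay of $\alpha$ (Lemma~\ref{lem:blowup-value}). After that, Proposition~\ref{prop:01} handles the annulus $R_k\le|x|\le\frac{3}{8}\lambda_k^{-1}$, and the outer shell $B^+\setminus B^+_{\frac14\lambda_k^{-1}}$ is finished not by Harnack but by decomposing $\tilde v_k+4\log|x|$ into an $L^p$-small piece, an explicit logarithmic potential, and a harmonic remainder with $o(1)$ boundary oscillation, to which the maximum principle applies.
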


\

To prove  Theorem \ref{thm:02}, we focus on establishing the $C^0$-convergence in the blow-up process, i.e. the fourth conclusion of Theorem \ref{thm:02} since  conclusions $(1)-(3)$ are standard now by \cite{Guo-Liu, Bao-Wang-Zhou,Zhang-Zhou}.

For conclusion $(4)$, by a standard blow-up analysis, it is  not hard to see that we only need to show
$$\lim_{k\to\infty}\left\| u_k(x)-u_k(x_k)-v\Big(\frac{x-x_k}{\lambda_k} \Big) \right\|_{osc(B^+\setminus B^+_{\lambda_kR_k}(x_k))}=0,
$$
for some $R_k\to+\infty$. We divide the oscillation estimate into two parts: $B^+_{\frac{1}{2}}(x_k)\setminus B^+_{\lambda_kR_k}(x_k)$ and $B^+\setminus B^+_{\frac{1}{2}}(x_k)$. For the first part, we will prove that both the tangential and radial oscillation are small. To control the tangential oscillation, we will use  Green's formula to derive a pointwise estimate of $\nabla v_k$, where there appears a bad term
$$\frac{|\log dist(x,\partial\R^2_+)|}{|x|^{3/2}}.
$$
 One may see that this term is not bounded if $x$ goes to the boundary even if $|x|$ is big. To overcome this obstacle, we will use the method of integration and the symmetry of Green's function for the unit ball, see Corollary \ref{cor:01}. To control the radial oscillation, the key point is to derive the following type energy decay
 $$\alpha(t)=4+\frac{O(1)}{t^p},\ \ t\in \Big[R_k,\frac{1}{2}\lambda_k^{-1}\Big]
 $$
  for some $p>0$, where $\alpha(t)$ is the energy defined by \eqref{equ:def-2}. This will be derived by combining  Pohozaev's type identity, the method of integration and above point estimate of $\nabla v_k$, see Lemma \ref{lem:blowup-value}. With the help of energy decay estimate, we will use an ODE method to get  the control of radial oscillation, see Proposition \ref{prop:01}. To estimate the second part, we will use the PDE's theory including $L^p$-theory, Green's formula and maximal principle to prove the oscillation is small.

\

The rest of paper is organized as follows. In Sect. \ref{sec:basic-lem}, we will prove some basic lemmas including a bounded oscillation lemma and a fast decay lemma near the boundary which will be used in our later proof. In Sect. \ref{sec:proof-thm}, we first derive a  point estimate of $\nabla v_k$. Then additionally  by using Pohozaev's type identity and the method of integration, we will establish a energy decay lemma which is crucial in our proof. Finally, we will prove Theorem \ref{thm:02} at the end of this section.

\

\section{Some basic lemmas}\label{sec:basic-lem}

\

In this section, we will  recall some classical blow-up analysis for Liouville type equation, and prove some lemmas which will be used in our later proof, such as a bounded oscillation estimate near the boundary, a fast decay lemma near the boundary and so on.

\

We start by recalling some standard theory in the blow-up analysis of Liouville type equation. See \cite{Brezis,Li-Shafrir,Li,Bao-Wang-Zhou,Guo-Liu}.   Let $x_k\in \overline{B^+}$ be the point such that $$u_k(x_k)=\sup_{B^+}u_k(x).$$ By \eqref{ineq:02}, it is easy to check that $$x_k\to 0\ \ and \ \ u_k(x_k)\to +\infty\ \ as \ \ k\to\infty.$$

Set $\lambda_k:=e^{ -\frac{1}{2}u_k(x_k)}$ and $$v_k(x):=u_k(x_k+\lambda_kx)+2\log\lambda_k.$$ Denote $$\Omega_k:=\{x\in\R^2 |x_k+\lambda_kx\in B^+\}$$ and $$d_k:=dist(x_k,\partial^0B^+).$$
Then we have the following two cases.

\

Case 1: $\lim_{k\to\infty}\dis\frac{d_k}{\lambda_k}=\infty$.

\

In this case, it is easy to check  that $v_k(x)$ is well defined  in the domain $\Omega_k$  which converges to the whole plane $\R^2$. By the standard theory of Liouville equation,  it is well known that $v_k(x)\to v(x)$ in $C^2_{loc}(\R^2)$, where $v(x)$ satisfies Liouville equation
\begin{equation}\label{equ:Liouville-inte-equ}
-\Delta v(x)=e^{v},\  \ in\ \ \R^2\  \ with\ \ \int_{\R^2}e^{v}dx\leq C<+\infty.
 \end{equation}
It follows from  the classification result  in \cite{Chen-Li} that  $$v(x)=-2\log\Big(1+\frac{|x|^2}{8}\Big),\ \ \int_{\R^2}e^{v}dx=8\pi.$$

%
%

\

Case 2: $\lim_{k\to\infty}\dis\frac{d_k}{\lambda_k}=a<\infty$.

\

In this case, the domain $\Omega_k$  converges to $\R^2_a:=\{x=(x^1,x^2)\in\R^2 |\ x^2\geq -a\}$.  Since $$v_k(0)=0,\,\,v_k(x)\leq 0,\ \ \forall\, x\in\Omega_k$$ and $$|\Delta v_k(x)|\leq C,\ \  \Big|\frac{\partial v_k}{\partial \overrightarrow{n}}(x)\Big|\leq C,$$ by a standard elliptic theory involving a Harnack inequality near a Neumann-type boundary (see Lemma A.2 in \cite{Jost-Wang-Zhou-Zhu}), we claim that
\begin{align}\label{ineq:05}
\|v_k\|_{L^\infty(B^+_R)}\leq C(R)\ \  uniformly \ \ in \ \ B^+_R.
 \end{align}
In fact, denoting $\tilde{x}_k=(0,-\frac{d_k}{\lambda_k})$ and setting $$\tilde{v}_k(x):=v_k(x+\tilde{x}_k),$$ we can check  that $\tilde{v}_k(x)\leq 0,\ \ \tilde{v}_k(-\tilde{x}_k)=0$ and
\begin{align*}
\begin{cases}
-\Delta \tilde{v}_k=e^{ \tilde{v}_k},\ \ &in\ \ B^+_{\frac{1}{2}\lambda_k^{-1}},\\
\dis\frac{\partial \tilde{v}_k}{\partial \overrightarrow{n}}=e^{\frac{1}{2}\tilde{v}_k},\ \ &on\ \  \partial^0B^+_{\frac{1}{2}\lambda_k^{-1}}.
\end{cases}
\end{align*}

 Let $w_k$ be the solution of
\begin{align*}
\begin{cases}
-\Delta w_k=-\Delta \tilde{v}_k,\ \ &in\ \ B^+_{2R},\\
\dis\frac{\partial w_k}{\partial \overrightarrow{n}}=0,\ \ &on\ \  \partial^0B^+_{2R},\\
w_k=0,\ \ &on\ \  \partial^+B^+_{2R}.
\end{cases}
\end{align*}
Extend $w_k$ evenly, then $\|w_k\|_{L^\infty(B^+_{2R})}\leq C(R)$. Set $\eta_k=\tilde{v}_k-w_k$, then
\begin{align*}
\begin{cases}
-\Delta \eta_k=0,\ \ &in\ \ B^+_{2R},\\
\dis\frac{\partial \eta_k}{\partial \overrightarrow{n}}=\frac{\partial \tilde{v}_k}{\partial \overrightarrow{n}},\ \ &on\ \  \partial^0B^+_{2R},\\
\eta_k=\tilde{v}_k,\ \ &on\ \  \partial^+B^+_{2R}.
\end{cases}
\end{align*} By the Harnack inequality of Lemma A.2 in \cite{Jost-Wang-Zhou-Zhu}) and the fact that $|\eta_k(-\tilde{x}_k)|\leq C<+\infty$, we have $\|\eta_k\|_{L^\infty(B^+_{\frac{3}{2}R})}\leq C(R)$, which implies the claim \eqref{ineq:05} immediately.

By \eqref{ineq:05} and the standard elliptic theory, we know that $v_k$ converges in $C^2(B_R(0)\cap \Omega_k\cap\R^2_a)$ to a function $v$ which satisfies
\begin{align*}
\begin{cases}
-\Delta v=e^v\ \ &in\  \ \R^2_a,\\
\dis\frac{\partial v}{\partial \overrightarrow{n}}=e^{\frac{v}{2}},\ \ &on\ \ \partial\R^2_a,
\end{cases}
\end{align*} and $$\int_{\R^2_a}e^vdx+\int_{\partial\R^2_a}e^{\frac{v}{2}}ds_x\leq C<\infty.$$

Using the classification result in \cite{Li-Zhu}, we know $$v(x)=\log\frac{8\lambda^2}{\Big[\lambda^2+(x^1+s_0)^2+\Big(x^2+a+\frac{\lambda}{\sqrt{2}}\Big)^2\Big]^2},\ \ x=(x^1,x^2)\in\R^2$$ for some $\lambda>0$ and $s_0\in\R$ and $$\int_{\R^2_a}e^vdx+\int_{\partial\R^2_a}e^{\frac{v}{2}}ds_x=4\pi.$$

Moreover, we can choose a sequence $R_k\to\infty$ such that $\lambda_kR_k\to 0$ and passing to a subsequence, there holds
\begin{align}\label{equ:11}
\|v_k(x)-v(x)\|_{C^1(B_{2R_k}\cap \Omega_k\cap\R^2_a)}=o(1)\ \ and\ \ \int_{ B_{R_k}\cap \Omega_k}e^{v_k(x)}dx+ \int_{ B_{R_k}\cap \partial^0 \Omega_k}e^{\frac{1}{2}v_k(x)}ds_x =4\pi+o(1),
\end{align} where $\partial^0\Omega_k:=\{x=(x^1,x^2)\in\overline{\Omega}_k\ \ |\ \ x^2=-\frac{d_k}{\lambda_k}\}.$

\

\begin{lem}\label{lem:01}
Under the assumptions of Theorem \ref{thm:02},  case $(1)$ will not happen, i.e. only case $(2)$ holds. Moreover, we have $$\int_{\Omega_k\setminus B_{R_k}}e^{v_k(x)}dx+ \int_{ \partial^0 \Omega_k \setminus B_{R_k} }e^{\frac{1}{2}v_k(x)}ds_x =o(1).$$
\end{lem}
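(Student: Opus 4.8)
The plan is to prove the two assertions of Lemma~\ref{lem:01} together, exploiting the quantization of the total energy and the sign condition $v_k\le 0$. First I would argue that Case~1 cannot occur. In Case~1 the rescaled functions $v_k$ converge in $C^2_{loc}(\R^2)$ to the standard entire bubble $v(x)=-2\log(1+|x|^2/8)$ with $\int_{\R^2}e^v=8\pi$, and in particular $\int_{B_{R_k}\cap\Omega_k}e^{v_k}\to 8\pi$ for a suitable slowly growing $R_k\to\infty$ with $\lambda_kR_k\to 0$. On the other hand, the Neumann boundary contributes nonnegatively: since $d_k/\lambda_k\to\infty$, on the region near $\partial^0 B^+$ the normal derivative condition $\partial u_k/\partial\vec n=e^{u_k/2}>0$ forces, by the standard Pohozaev/balancing argument for the boundary blow-up (as in \cite{Guo-Liu,Bao-Wang-Zhou}), the total blow-up mass at the origin to be at least $8\pi$ plus a strictly positive contribution, contradicting the uniform energy bound \eqref{ineq:01} once one checks there is no room for an additional $8\pi$ or $4\pi$ of mass. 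Concretely, I would run the boundary Pohozaev identity on $B^+_\rho$ for $\lambda_k R_k\le\rho\le 1/2$ and compare the interior bubble mass $8\pi$ with the admissible total, using \eqref{ineq:02} (single blow-up point) to rule out the coexistence of an interior bubble with a genuine boundary bubble; the conclusion is that the blow-up point must sit at finite distance $a<\infty$ from the boundary, i.e. Case~2.

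For the second assertion — the ``no energy leaks out'' statement — I would use the Brezis--Merle alternative together with \eqref{equ:11}. Write $m:=\lim_{k\to\infty}\big(\int_{\Omega_k}e^{v_k}+\int_{\partial^0\Omega_k}e^{v_k/2}\big)$, which exists (up to subsequence) and is finite by \eqref{ineq:01}, and equals the total blow-up mass at $0$. By \eqref{equ:11} the part inside $B_{R_k}$ already carries $4\pi+o(1)$. If the tail carried a positive amount $\delta>0$ of mass, then by the quantization theory for the Neumann problem (Li--Shafrir type quantization of \cite{Guo-Liu}) applied on the annular regions $B^+_{2\lambda_k R_k}(x_k)\setminus B^+_{\lambda_k R_k}(x_k)$ and their dilates, either this mass concentrates at $0$ — producing a second bubble and hence $m\ge 4\pi+4\pi$ or $m\ge 4\pi+8\pi$, i.e. another blow-up configuration — or it spreads to a point of $\partial^0 B^+\setminus\{0\}$ or an interior point, both excluded by \eqref{ineq:02}. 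In either case we get a contradiction, so $\delta=0$, which is exactly
$$\int_{\Omega_k\setminus B_{R_k}}e^{v_k}\,dx+\int_{\partial^0\Omega_k\setminus B_{R_k}}e^{v_k/2}\,ds_x=o(1).$$

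The main obstacle I anticipate is making the dichotomy in the tail rigorous without circularity: one must show that the ``leaked'' mass cannot be diffuse (no concentration) yet still positive. Here the Brezis--Merle concentration-compactness dichotomy is the right tool — on any fixed scale, either $e^{v_k}$ stays locally bounded in $L^\infty$ (so, after the $C^1$ convergence on $B_{R_k}$ and the decay of $v$ at infinity, the integral over the transition annulus is $o(1)$), or it concentrates, forcing an extra bubble. Since $v(x)\to-\infty$ like $-4\log|x|$ as $|x|\to\infty$, $e^v$ is integrable near infinity and $\int_{\R^2_a\setminus B_R}e^v\to 0$ as $R\to\infty$; combined with \eqref{equ:11} on $B_{2R_k}$ this kills the diffuse alternative, and \eqref{ineq:02} kills the concentration alternative. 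A secondary technical point is the uniform control of the boundary term $e^{v_k/2}$ on $\partial^0\Omega_k\setminus B_{R_k}$, which I would handle by the trace/Green's-representation estimates near the Neumann boundary (Lemma~A.2 of \cite{Jost-Wang-Zhou-Zhu}) that are already invoked above, reducing the boundary integral to the interior one.
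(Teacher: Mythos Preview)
Your proposal overlooks the short route the paper actually takes and, in trying to argue from first principles, introduces a genuine gap.

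The paper's proof is a two-line application of the sharp quantization already established in \cite{Bao-Wang-Zhou,Zhang-Zhou}: under the hypotheses of Theorem~\ref{thm:02} one has
\[
\int_{B^+}e^{u_k}\,dx+\int_{\partial^0 B^+}e^{u_k/2}\,ds_x=4\pi+o(1).
\]
Given this, Case~1 is impossible because the interior bubble alone carries $8\pi+o(1)$; and in Case~2 the bubble region $B_{R_k}\cap\Omega_k$ already absorbs $4\pi+o(1)$ by \eqref{equ:11}, so the tail is $o(1)$ by subtraction. That is the entire argument.

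Your first paragraph has a real error: you claim that in Case~1 the mass being at least $8\pi$ (or $8\pi$ plus a positive boundary contribution) ``contradicts the uniform energy bound \eqref{ineq:01}''. It does not. Inequality \eqref{ineq:01} only says the total is $\le C$ for some unspecified constant; nothing prevents $C\ge 8\pi$. What is needed to reach a contradiction is precisely the identity above, i.e.\ the sharp $4\pi$ quantization from \cite{Bao-Wang-Zhou,Zhang-Zhou}, and once you cite that result the Pohozaev/balancing discussion is superfluous. Your suggestion to ``run the boundary Pohozaev identity on $B^+_\rho$'' amounts to reproving that quantization, which is legitimate but is a nontrivial paper-length argument, not something one sketches in a lemma.

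For the second assertion your dichotomy plan (diffuse versus concentrating tail) is workable in principle but is again a detour: the Brezis--Merle/Li--Shafrir machinery you invoke is exactly what goes into the cited $4\pi$ result, so you are redoing the same work without the payoff of a cleaner statement. Once the total equals $4\pi+o(1)$, the tail estimate is immediate and no separate neck analysis is needed. If you prefer to avoid citing \cite{Bao-Wang-Zhou,Zhang-Zhou}, you should state and prove the $4\pi$ quantization explicitly; otherwise, cite it and shorten both parts to the subtraction argument.
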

\begin{proof}
It follows from  the results in \cite{Bao-Wang-Zhou} or \cite{Zhang-Zhou}  that
\begin{equation}\label{equ:16}
\int_{B^+}e^{u_k(x)}dx+\int_{\partial^0B^+}e^{\frac{u_k(x)}{2}}ds_x=4\pi+o(1).
\end{equation}
If  case $(1)$ holds, then it is easy to get that $$\int_{B^+}e^{u_k(x)}dx\geq 8\pi,$$ which is a contradiction. Thus  case $(1)$ will not happen. Since  case $(2)$ implies \eqref{equ:11},  the second conclusion of the lemma follows immediately from  \eqref{equ:16}.
\end{proof}

\

\begin{rem}\label{rem}
So far, one can see that we have proved conclusions $(1)-(3)$ of Theorem \ref{thm:02}. In the sequel,  we focus on the proof of conclusion $(4)$.
\end{rem}

\

Since the solution has  only one bubble, by the selection of bubbling areas in \cite{Li-Shafrir}, we have the following lemma.
\begin{lem}\label{lem:04}
Under the assumptions of Theorem \ref{thm:02}, we have $$u_k(x)+2\log|x-x_k|\leq C.$$
\end{lem}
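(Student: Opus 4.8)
The plan is to establish the pointwise upper bound $u_k(x) + 2\log|x-x_k| \le C$ by a standard dichotomy argument on the rescaled radius $r = |x-x_k|$, splitting into the "inner" region $r \le \lambda_k R_k$ and the "outer" region $r \ge \lambda_k R_k$, where $R_k \to \infty$ is the sequence from \eqref{equ:11}. In the inner region the estimate is immediate from the bubble profile: for $x = x_k + \lambda_k y$ with $|y| \le R_k$ we have $u_k(x) + 2\log|x-x_k| = v_k(y) + 2\log|y|$, and since $v_k \to v$ in $C^1(B_{2R_k}\cap\Omega_k\cap\R^2_a)$ with $v(y) + 2\log|y|$ bounded above (because $v$ decays like $-4\log|y| + O(1)$ and in particular $v(y) \le -2\log|y| + C$ globally on $\R^2_a$), the quantity $v_k(y) + 2\log|y|$ is uniformly bounded for $1 \le |y| \le R_k$; for $|y| \le 1$ it is bounded because $v_k \le 0$ and $\log|y| \le 0$. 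So the content is entirely in the outer region.

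For the outer region $\lambda_k R_k \le |x-x_k| \le \tfrac12$, I would argue by contradiction in the spirit of the "selection of bubbling areas" / "fast decay" arguments of Li--Shafrir \cite{Li-Shafrir}. Suppose the conclusion fails; then along a subsequence there are points $y_k$ with $s_k := |y_k - x_k| \in [\lambda_k R_k, \tfrac12]$ and $u_k(y_k) + 2\log s_k \to +\infty$. Since $0$ is the only blow-up point and $s_k$ stays away from $0$ only in the rescaled sense, one must have $s_k \to 0$. Now rescale at scale $s_k$: set $\mu_k := e^{-\frac12 u_k(y_k)}$ (note $\mu_k/s_k \to 0$ by the assumed blow-up) and $w_k(z) := u_k(y_k + \mu_k z) + 2\log\mu_k$. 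The domain of $w_k$ either converges to all of $\R^2$ (if $\mathrm{dist}(y_k,\partial^0 B^+)/\mu_k \to \infty$) or to a half-plane $\R^2_b$. Using Lemma \ref{lem:01} — in particular the energy concentration $\int_{B^+}e^{u_k} + \int_{\partial^0 B^+}e^{u_k/2} = 4\pi + o(1)$ together with the fact that $4\pi$ of this mass already accumulates in $B^+_{\lambda_k R_k}(x_k)$ — one sees there is only $o(1)$ energy left in the region $|x - x_k| \ge \lambda_k R_k$, hence $w_k$ can carry no bubble: but standard blow-up analysis for the Liouville equation (interior, via \cite{Chen-Li}) or with Neumann boundary (via \cite{Li-Zhu}), applied to $w_k$ which satisfies the same equation with $w_k(0)=0$ and locally bounded nonlinearity, forces a limiting bubble with mass $\ge 4\pi$ (interior $8\pi$, boundary $4\pi$). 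This contradicts the $o(1)$ residual energy.

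The main obstacle, and the step requiring care, is the second one: making rigorous the passage from "$u_k(y_k) + 2\log s_k \to \infty$ with $s_k \to 0$" to "a genuine new bubble forms, carrying $\gtrsim 4\pi$ energy." One has to handle both alternatives for where $y_k$ sits relative to the boundary (interior rescaling vs. Neumann-boundary rescaling), and one must verify the uniform local estimates on $w_k$ needed to extract a $C^2_{loc}$ limit — in the boundary case this is exactly the Harnack-near-Neumann-boundary argument already carried out in the discussion preceding Lemma \ref{lem:01} (Lemma A.2 of \cite{Jost-Wang-Zhou-Zhu}), so it can be invoked verbatim. Once the limit profile is identified, its total mass is $\ge 4\pi$ by the classification results, and the energy bookkeeping from Lemma \ref{lem:01} and \eqref{equ:11} closes the contradiction. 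I expect the whole argument to be short given how much has already been set up; indeed the cleanest route is probably to cite \cite{Li-Shafrir} for the selection-of-bubbles mechanism and \cite{Bao-Wang-Zhou} for the one-bubble conclusion, and simply note that the pointwise bound is the standard consequence.
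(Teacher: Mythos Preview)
Your overall strategy matches the paper's: bound the inner region via the bubble profile, and for the outer region argue by contradiction that a second bubble would form, violating the energy budget $4\pi + o(1)$ from Lemma~\ref{lem:01}. The two-case split (interior vs.\ boundary rescaling) and the appeal to \cite{Chen-Li}/\cite{Li-Zhu} for classification are also exactly what the paper does.

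There is, however, a genuine gap in how you extract the second bubble. You rescale directly at $y_k$ with scale $\mu_k = e^{-u_k(y_k)/2}$, set $w_k(z) = u_k(y_k + \mu_k z) + 2\log\mu_k$, and then claim that the Harnack-near-Neumann-boundary argument preceding Lemma~\ref{lem:01} yields local $C^0$ bounds on $w_k$. But that argument relied essentially on the global upper bound $v_k \le 0$ (coming from $x_k$ being the \emph{global maximum}): this is what makes $|\Delta v_k| = e^{v_k} \le 1$ bounded and what allows the one-sided Harnack on the harmonic remainder. Your $y_k$ is not a maximum point --- it is only a point where $u_k(y_k) + 2\log s_k$ is large --- so you have no a priori upper bound on $w_k$, $e^{w_k}$ is not known to be locally bounded, and neither the auxiliary Dirichlet estimate nor the Harnack step goes through. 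Hence you cannot pass to a $C^2_{loc}$ limit as written.

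The paper closes this gap with the Schoen-type selection from \cite{Li-Shafrir}: set $t_k = \tfrac12|x_k - y_k|$, maximize $\phi_k(x) := u_k(x) + 2\log(t_k - |x - y_k|)$ over $\overline{B^+_{t_k}(y_k)}$, and call the maximizer $p_k$. Then $\phi_k(p_k) \ge \phi_k(y_k) \to +\infty$, and with $s_k = \tfrac12(t_k - |p_k - y_k|)$, $\epsilon_k = e^{-u_k(p_k)/2}$ one obtains both $s_k/\epsilon_k \to \infty$ \emph{and}, crucially, the uniform bound $u_k(x) \le u_k(p_k) + 2\log 2$ for all $x \in B^+_{s_k}(p_k)$. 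Rescaling at $p_k$ (rather than $y_k$) now gives $w_k \le 2\log 2$ on balls of radius $s_k/\epsilon_k \to \infty$, so compactness holds, a genuine second bubble (disjoint from $B^+_{\lambda_k R_k}(x_k)$) is produced with mass $\ge 4\pi$, and the contradiction with Lemma~\ref{lem:01} follows. Your final sentence cites \cite{Li-Shafrir} for ``the selection-of-bubbles mechanism,'' which is precisely this step --- but the concrete argument you wrote out bypasses it, and without it the proof does not close.
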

\begin{proof}
From \eqref{equ:11}, we see $$v_k(x)+2\log|x|\leq C,\ \ |x|\leq 2R_k,\ x\in\Omega_k,$$ which implies $$u_k(x)+2\log|x-x_k|\leq C,\ \ |x-x_k|\leq 2\lambda_k R_k,\ x\in B^+.$$

Now we claim $$u_k(x)+2\log|x-x_k|\leq C,\ \  x\in B^+.$$ Otherwise, there exists $y_k\in \overline{B^+}$ such that $|y_k-x_k|\geq 2\lambda_kR_k$ and $$u_k(y_k)+2\log|y_k-x_k|\to +\infty.$$ Denote $t_k:=\frac{1}{2}|x_k-y_k|$ and $$\phi_k(x):=u_k(x)+2\log(t_k-|x-y_k|).$$ Let $p_k$ be the maximal point such that $$\phi_k(p_k)=\max_{x\in \overline{B^+_{t_k}(y_k)}}\phi_k(x).$$ It is easy to check that $$\phi_k(p_k)\geq \phi_k(y_k)\to+\infty,$$ which also implies $u_k(p_k)\to +\infty$.

Denote $$s_k:=\frac{1}{2}(t_k-|p_k-y_k|)\ \ and \ \ \epsilon_k:=e^{-\frac{1}{2}u_k(p_k)}.$$ Since $\phi_k(p_k)\to +\infty$, we have $$\frac{s_k}{\epsilon_k}\to +\infty.$$ Noting that for any $x\in B^+_{s_k}(p_k)$, there hold $$u_k(x)+2\log|t_k-|x-y_k||\leq \phi_k(p_k)=u_k(p_k)+2\log(2s_k)$$ and $$t_k-|x-y_k|\geq t_k-|x-p_k|-|p_k-y_k|\geq s_k,$$ we get $$u_k(x)\leq u_k(p_k)+2\log 2,\ \ \forall \ x\in B^+_{s_k}(p_k).$$

Set $$w_k(x)=u_k(p_k+\epsilon_kx)+2\log\epsilon_k, p_k+\epsilon_kx\in B^+.$$ It is easy to check that $$ p_k+\epsilon_kx\in B^+, w_k(0)=0,\,w_k(x)\leq 2\log2,\ \ \forall \,\ |x|\leq \frac{s_k}{\epsilon_k}.$$

Now, we distinguish the following two cases.

\

Case 1: $\lim_{k\to\infty}\dis\frac{dist(p_k,\partial^0B^+)}{\epsilon_k}=+\infty$.

\

In this case, we see that $w_k(x)$ is well defined  in a domain which converges to the whole plane $\R^2$ as $k\to \infty$. Then by the standard theory of Liouville type equation, we know that $w_k(x)\to w(x)$ in $C^2_{loc}(\R^2)$, where $w(x)$ satisfies Liouville equation \eqref{equ:Liouville-inte-equ}. Moreover, we can select a sequence $R^1_k\to\infty$ such that $R^1_k\epsilon_k=o(1)s_k$ and passing to a subsequence, there holds $$\|w_k(x)-w(x)\|_{C^0(B_{R^1_k})}\to 0.$$ It is easy to check that $$B^+_{\lambda_kR_k}(x_k)\cap B_{\epsilon_k R^1_k}(p_k)=\emptyset.$$
 Hence $$\int_{B^+}e^{u_k(x)}dx\geq \int_{B_{\epsilon_kR^1_k}}e^{u_k(x)}dx=8\pi+o(1),$$ which is a contradiction.

\

Case 2: $\lim_{k\to\infty}\dis\frac{dist(p_k,\partial^0B^+)}{\epsilon_k}=b<+\infty$.

\

In this case, the domain $\Omega^1_k=\{x\in\R^2\ |\ p_k+\epsilon_kx\in B^+\}$ converges to  $\R_b^2$ as $k\to \infty$. By a similar argument as before, we know that $w_k(x)\to w(x)$ in $C^1(B_R(0)\cap \Omega^1_k)$, where $w(x)$ is a solution of
\begin{align*}
\begin{cases}
-\Delta w=e^w\ \ &in\  \ \R^2_b,\\
\dis\frac{\partial w}{\partial \overrightarrow{n}}=e^{\frac{w}{2}},\ \ &on\ \ \partial\R^2_b.
\end{cases}
\end{align*}
Moreover, we can select a sequence $R^1_k\to\infty$ such that $R^1_k\epsilon_k=o(1)s_k$ and passing to a subsequence, there holds $$\|w_k(x)-w(x)\|_{C^1(B_{R^1_k}\cap \Omega^1_k)}\to 0.$$ It is easy to check that $$B^+_{\lambda_kR_k}(x_k)\cap B^+_{\epsilon_k R^1_k}(p_k)=\emptyset.$$
As a result,
\begin{align*}
&\int_{B^+}e^{u_k(x)}dx+\int_{\partial^0B^+}e^{\frac{1}{2}u_k(x)}ds_x\\ \geq & \int_{B^+_{\lambda_kR_k}(x_k)}e^{u_k(x)}dx+ \int_{\partial^0B^+_{\lambda_kR_k}(x_k)}e^{\frac{1}{2}u_k(x)}ds_x+ \int_{B^+_{\epsilon_kR^1_k}(p_k)}e^{u_k(x)}dx+ \int_{\partial^0B^+_{\epsilon_kR^1_k}(p_k)}e^{\frac{1}{2}u_k(x)}ds_x\\=&8\pi+o(1),
\end{align*}
which is also a contradiction.

We proved the lemma.

\end{proof}

\

With the help of Lemma \ref{lem:04}, we have the following bounded oscillation estimate near the boundary.
\begin{lem}\label{lem:osc}
Let $u_k$ be a sequence of solutions of \eqref{equ:01} satisfying \eqref{ineq:01}-\eqref{ineq:02}. Then $$osc_{ B^+_{\frac{1}{2}d_k(x)}(x)}u_k\leq C,\ \ \forall x\in B^+\setminus \{x_k\},$$ where $d_k(x)=|x-x_k|$ and $C$ is a universal constant independent of $k$ and $x$.
\end{lem}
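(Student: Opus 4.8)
The plan is to prove this bounded oscillation estimate by a rescaling argument near an arbitrary point $x\in B^+\setminus\{x_k\}$, combined with a standard interior/boundary Harnack inequality for the rescaled equation. Fix such an $x$ and set $d=d_k(x)=|x-x_k|$. Rescale by defining $\tilde u_k(y):=u_k(x+dy)+2\log d$ on the set $\tilde\Omega_k:=\{y\in\R^2\ |\ x+dy\in B^+\}$, which contains $B_{1/2}(0)\cap\tilde\Omega_k$ by construction; in these coordinates the claim becomes $osc_{B_{1/2}(0)\cap\tilde\Omega_k}\tilde u_k\leq C$. The rescaled function solves $-\Delta\tilde u_k=e^{\tilde u_k}$ in $B_{1/2}(0)\cap\tilde\Omega_k$ with Neumann condition $\partial\tilde u_k/\partial\vec n=e^{\tilde u_k/2}$ on the flat part of the boundary (when it is present). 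The crucial input is Lemma \ref{lem:04}: $u_k(z)+2\log|z-x_k|\leq C$ for all $z\in B^+$, which after rescaling gives $\tilde u_k(y)+2\log|y+ (x-x_k)/d\cdot\text{(something)}|\leq C$ — more cleanly, for $z=x+dy$ with $|y|\leq 1$ we have $|z-x_k|\geq |x-x_k|-|z-x|=d-d|y|\geq d/2$ when $|y|\le 1/2$... wait, that only bounds from below away from zero, but in fact $|z-x_k|$ stays comparable to $d$: indeed $d/2\le |z-x_k|\le 3d/2$ on $|y|\le 1/2$ since $|z-x|=d|y|\le d/2$. Hence Lemma \ref{lem:04} yields the uniform upper bound $\tilde u_k(y)\leq C$ on $B_{1/2}(0)\cap\tilde\Omega_k$.

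With a uniform upper bound in hand, the right-hand sides $e^{\tilde u_k}$ and $e^{\tilde u_k/2}$ are uniformly bounded on $B_{1/2}(0)\cap\tilde\Omega_k$. Now I would distinguish two regimes according to the distance from $x$ to $\partial^0 B^+$ measured in units of $d$. If that distance is $\geq d/4$, say, then $B_{1/8}(0)\cap\tilde\Omega_k$ is an honest interior ball and one applies the interior Harnack inequality for $-\Delta w = f$ with bounded $f$ to the function $C-\tilde u_k\geq 0$: this gives $\sup_{B_{1/16}} (C-\tilde u_k)\leq C'(\inf_{B_{1/16}}(C-\tilde u_k)+\|f\|_\infty)$, hence a two-sided bound $|\tilde u_k|\leq C$ once we know $\tilde u_k$ is somewhere bounded below on that ball — which is exactly an oscillation bound and does not require a pointwise normalization. (More precisely, Harnack applied to $C-\tilde u_k\ge 0$ bounds the oscillation directly: $\operatorname{osc} \tilde u_k = \operatorname{osc}(C-\tilde u_k)\le (C'-1)\inf(C-\tilde u_k)+C'\|f\|_\infty$ is not quite it; instead use that $\sup(C-\tilde u_k)\le C'\inf(C-\tilde u_k)+C'\|f\|_\infty$ gives $\sup(C-\tilde u_k)-\inf(C-\tilde u_k)\le (C'-1)\inf(C-\tilde u_k)+C'\|f\|_\infty$, and $\inf(C-\tilde u_k)$ is controlled once $\sup\tilde u_k$ — equivalently $\inf(C-\tilde u_k)\ge 0$ — is known together with one more estimate; the cleanest route is to split $\tilde u_k = h_k + g_k$ where $g_k$ solves the inhomogeneous problem with zero boundary data and $h_k$ is the harmonic/Neumann-harmonic part bounded above, then apply Harnack to $C-h_k$.) If instead $x$ is within $d/4$ of $\partial^0 B^+$, then after a harmless translation the flat boundary passes through $B_{1/8}(0)\cap\tilde\Omega_k$ and one uses the boundary Harnack inequality near a Neumann boundary, exactly the Lemma A.2 of \cite{Jost-Wang-Zhou-Zhu} already invoked in the excerpt: extend the solution evenly across the flat boundary (legitimate since the Neumann data $e^{\tilde u_k/2}$ is bounded, after subtracting off a fixed solution of the inhomogeneous Neumann problem as done in the derivation of \eqref{ineq:05}), reducing to the interior case.

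The main obstacle — and the step requiring the most care — is the Harnack inequality itself: a naive interior Harnack bounds $\sup/\inf$ of a nonnegative solution, but to get a genuine oscillation bound we need to know $\tilde u_k$ is bounded below at *some* point of each ball, which is not given a priori (there is no pointwise normalization like $v_k(0)=0$ away from the bubble). The standard fix, which I would carry out, is to write $\tilde u_k = g_k + h_k$ where $g_k$ solves $-\Delta g_k = e^{\tilde u_k}$ in $B_{1/4}(0)\cap\tilde\Omega_k$ with homogeneous boundary conditions and $h_k$ is the remaining piece with $-\Delta h_k=0$, $\partial_{\vec n}h_k$ bounded; by $L^p$-elliptic estimates $\|g_k\|_{L^\infty}\leq C$, while $h_k$ is bounded above (from $\tilde u_k\le C$ and $g_k$ bounded) and harmonic-type, so Harnack applies to the nonnegative function $C-h_k$ and — crucially — after subtracting its mean or using the mean value property gives a lower bound for $h_k$ on the slightly smaller ball in terms of its average, which is finite because $\int e^{\tilde u_k}$ is controlled via \eqref{ineq:01} and Lemma \ref{lem:04}. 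Unwinding the rescaling gives the claimed constant $C$, uniform in $k$ and $x$ since all estimates are scale-invariant.
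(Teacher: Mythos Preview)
Your proposal has a genuine gap. After rescaling you obtain $\tilde u_k\leq C$ on $B_{1/2}\cap\tilde\Omega_k$ with bounded right-hand sides, and you try to extract an oscillation bound by writing $\tilde u_k=g_k+h_k$ with $\|g_k\|_\infty\leq C$ and $h_k$ harmonic (after even reflection), then applying Harnack to $C-h_k\geq 0$. But Harnack gives $\sup(C-h_k)\leq C'\inf(C-h_k)$, hence $\operatorname{osc} h_k\leq (C'-1)(C-\sup h_k)$, and this is useless unless $\sup h_k$ is bounded \emph{from below}. It is not: on the neck $u_k(x)+2\log d_k(x)\to -\infty$ (eventually $u_k(x)\approx -u_k(x_k)-4\log|x-x_k|$), so $\tilde u_k$ and hence $h_k$ can be arbitrarily negative. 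Your attempt to recover a lower bound from ``the average, which is finite because $\int e^{\tilde u_k}$ is controlled'' fails: Jensen only bounds the average of $\tilde u_k$ from \emph{above}. A clean counterexample at the level of the harmonic piece is $h(y)=-Ny_1$ on $B_1$: harmonic, $h\leq N$, zero Neumann data on $\{y_2=0\}$, yet oscillation $N$ on $B_{1/2}$. The Harnack argument needs a one-point normalization, exactly as in the derivation of \eqref{ineq:05} where $v_k(0)=0$ is available; no such normalization exists here.

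The paper's proof avoids this by working \emph{globally} on $B^+$ rather than locally near $x$. One writes $u_k=w_k+\phi_k+\eta_k$, where $\eta_k$ is harmonic with zero Neumann data on $\partial^0 B^+$; by even reflection and the maximum principle, $\operatorname{osc}_{B^+}\eta_k\leq \operatorname{osc}_{\partial^+ B^+}u_k$, which is bounded --- this is precisely the piece of global information your local rescaling discards. For the remaining potential part $w_k+\phi_k$ one uses the Green representation to write $(w_k+\phi_k)(p_1)-(w_k+\phi_k)(p_2)$ as integrals of $\log\frac{|p_1-y|}{|p_2-y|}$ against $e^{u_k}$ and $e^{u_k/2}$, splits the integration into $B_{\frac34 d_k(x)}(x)$ and its complement, and on the inner piece uses Lemma~\ref{lem:04} pointwise to bound $e^{u_k(x+d_k(x)y)}d_k(x)^2\leq C$. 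If you want to rescue your approach you would have to import control of the harmonic part from $\partial^+ B^+$ into the rescaled picture; at that point you are essentially redoing the Green's-function computation.
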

\begin{proof}
Let $w_k$ be the solution of
\begin{align*}
\begin{cases}
-\Delta w_k=-\Delta u_k,\ \ &in\ \ B^+,\\
\dis\frac{\partial w_k}{\partial \overrightarrow{n}}=0,\ \ &on\ \  \partial^0 B^+,\\
w_k=0,\ \ &on\ \  \partial^+ B^+.
\end{cases}
\end{align*}
Define
\begin{equation}
\phi_k(x):=-\frac{1}{\pi}\int_{\partial^0 B^+}\log|x-y|e^{\frac{u_k(y)}{2}}ds_y,\ \ x\in B^+.
\end{equation}
 Then we can check
\begin{align}
\begin{cases}
-\Delta \phi_k=0,\ \ &in\ \ B^+,\\
\dis\frac{\partial \phi_k}{\partial \overrightarrow{n}}=e^{\frac{u_k(x)}{2}},\ \ &on\ \  \partial^0 B^+.
\end{cases}
\end{align}
Letting $\eta_k=u_k-w_k-\phi_k$, there holds
\begin{align}
\begin{cases}
-\Delta \eta_k=0,\ \ &in\ \ B^+,\\
\dis\frac{\partial \eta_k}{\partial \overrightarrow{n}}=0,\ \ &on\ \  \partial^0 B^+,\\
\eta_k=u_k,\ \ &on\ \  \partial^+ B^+.
\end{cases}
\end{align} Extending $\eta_k$ evenly, by maximal principle, we have $$osc_{B^+} \eta_k(x)\leq osc_{\partial^+B^+} u_k(x)=o(1).$$

Extending $w_k$ and $u_k$ evenly (we still use the same notations), by Green's formula, we have $$w_k(x)=\int_{B_1(0)}G(x,y)e^{u_k(y)}dy,$$ where $$G(x,y)=-\frac{1}{2\pi}\log |x-y|+H(x,y)$$ is the Green's function on $B_1$ with respect to the Dirichlet boundary and $H(x,y)$ is a smooth harmonic function.

For any $x\in B^+\setminus \{x_k\}$ and $p_1,p_2\in B^+_{\frac{1}{2}d_k(x)}(x)$, we have
\begin{align*}
&w_k(p_1)-w_k(p_2)+\phi_k(p_1)-\phi_k(p_2)\\
=&\int_{B_1(0)}\left(G(p_1,y)-G(p_2,y)\right) e^{u_k(y)}dy-\frac{1}{\pi}\int_{\partial^0 B^+}\log\frac{|p_1-y|}{|p_2-y|}e^{\frac{u_k(y)}{2}}ds_y\\
=&-\frac{1}{2\pi}\int_{ B}\log\frac{|p_1-y|}{|p_2-y|}e^{u_k(y)}dy-\frac{1}{\pi}\int_{\partial^0 B^+}\log\frac{|p_1-y|}{|p_2-y|}e^{\frac{u_k(y)}{2}}ds_y+O(1)\\
=&-\frac{1}{2\pi}\int_{ B_{\frac{3}{4}d_k(x)}(x)}\log\frac{|p_1-y|}{|p_2-y|}e^{u_k(y)}dy-\frac{1}{\pi}\int_{\partial^0 B_{\frac{3}{4}d_k(x)}^+(x)}\log\frac{|p_1-y|}{|p_2-y|}e^{\frac{u_k(y)}{2}}ds_y\\
& -\frac{1}{2\pi}\int_{ B\setminus B_{\frac{3}{4}d_k(x)}(x)}\log\frac{|p_1-y|}{|p_2-y|}e^{u_k(y)}dy-\frac{1}{\pi}\int_{\partial^0 (B^+\setminus B_{\frac{3}{4}d_k(x)}^+(x))}\log\frac{|p_1-y|}{|p_2-y|}e^{\frac{u_k(y)}{2}}ds_y+O(1)\\
=&\mathbf{I}+\mathbf{II}+O(1).
\end{align*}

We first estimate $\mathbf{II}$. Noting that $|y-x|\geq\frac{3}{4}d_k(x)$, we find
$$\left|\log\frac{|p_1-y|}{|p_2-y|}\right|\leq C,
$$
 which implies $\mathbf{II}=O(1)$.

For $\mathbf{I}$, a direct computation yields
\begin{align*}
|\mathbf{I}|&\leq C\int_{ B_{\frac{3}{4}d_k}(x)}\left|\log\frac{|p_1-y|}{|p_2-y|}\right|e^{u_k(y)}dy+C\int_{\partial^0 B_{\frac{3}{4}d_k}^+(x)}\left|\log\frac{|p_1-y|}{|p_2-y|}\right|e^{\frac{u_k(y)}{2}}ds_y\\
&= C\int_{ B_{\frac{3}{4}}(0)}\left|\log\frac{|p_1-x-d_k(x)y|}{|p_2-x-d_k(x)y|}\right|e^{u_k(x+d_k(x)y)}d^2_k(x)dy\\&\quad+C\int_{\partial^0 B_{\frac{3}{4}}^+(0)}\left|\log\frac{|p_1-x-d_k(x)y|}{|p_2-x-d_k(x)y|}\right|e^{\frac{u_k(x+d_k(x)y)}{2}}d_k(x)ds_y\\
&= C\int_{ B_{\frac{3}{4}}(0)}\left|\log\frac{|\frac{p_1-x}{d_k(x)}-y|}{|\frac{p_2-x}{d_k(x)}-y|}\right|e^{u_k(x+d_k(x)y)}d^2_k(x)dy+C\int_{\partial^0 B_{\frac{3}{4}}^+(0)}\left|\log\frac{|\frac{p_1-x}{d_k(x)}-y|}{|\frac{p_2-x}{d_k(x)}-y|}\right|e^{\frac{u_k(x+d_k(x)y)}{2}}d_k(x)ds_y.
\end{align*}
 Noting
 $$|x+d_k(x)y-x_k|\geq |x-x_k|-|d_k(x)y|\geq \frac{1}{4}d_k(x),\ \ \forall y\in B_{\frac{3}{4}}(0),
 $$
  by Lemma \ref{lem:04}, we have $$u_k(x+d_k(x)y)\leq C-2\log |x+d_k(x)y-x_k|\leq C-2\log d_k(x),\ \ \forall y\in B_{\frac{3}{4}}(0).$$
   From
   $$\left|\frac{p_1-x}{d_k(x)}\right|\leq \frac{1}{2},\ \ \left|\frac{p_2-x}{d_k(x)}\right|\leq \frac{1}{2},$$
   one can easily find that $\mathbf{I}=O(1)$, which implies the conclusion of the lemma.
\end{proof}

\

At the end of this section, we prove the following fast decay lemma, which will be used in next section to estimate a upper bound of solutions and to control the radial oscillation of solutions. This type  of fast decay definition was firstly introduced in  \cite{Lin-Wei-Zhang}.
\begin{lem}\label{lem:fast-decay}
There exists a number sequence $\{N_k\}$ which tends to $+\infty$ as $k\to\infty$, such that $$v_k(x)+2\log|x|\leq -N_k,\ \ |x|\geq R_k,\ \ x\in\Omega_k.$$
\end{lem}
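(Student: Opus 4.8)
The plan is to combine the energy concentration statement of Lemma \ref{lem:01} with the pointwise upper bound $v_k(x) + 2\log|x| \le C$ from Lemma \ref{lem:04} (rescaled) and a Green's representation of $v_k$ on an annular region. First I would write, for $x \in \Omega_k$ with $R_k \le |x| \le \tfrac12\lambda_k^{-1}$, the Neumann Green's representation obtained by reflecting across $\partial^0\Omega_k$: after the even extension,
\begin{equation*}
v_k(x) = \frac{1}{2\pi}\int_{\Omega_k} \log\frac{1}{|x-y|}\, e^{v_k(y)}\, dy + \frac{1}{\pi}\int_{\partial^0\Omega_k} \log\frac{1}{|x-y|}\, e^{\frac{1}{2}v_k(y)}\, ds_y + h_k(x),
\end{equation*}
where $h_k$ is harmonic with controlled oscillation (this is exactly the decomposition $u_k = w_k + \phi_k + \eta_k$ used in Lemma \ref{lem:osc}, rescaled to $\Omega_k$). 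The key is that $h_k$ has bounded oscillation on the relevant region by Lemma \ref{lem:osc} together with \eqref{ineq:03}, and its value at a reference point near the bubble is controlled, so $h_k(x) = O(1)$ uniformly.

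Next I would split the integrals over $B_{R_k}$ and over $\Omega_k \setminus B_{R_k}$. On $B_{R_k}$, where the total mass is $4\pi + o(1)$ by \eqref{equ:11} and where $\log\frac{1}{|x-y|} = -\log|x| + o(\log|x|)$ since $|y| \le R_k \ll |x|$ — more precisely $\log|x-y| = \log|x| + \log|1 - y/x|$ and $|y/x| \le R_k/|x| \to 0$ when $|x| \gg R_k$, so $\log|x-y| = \log|x| + O(R_k/|x|)$ — the dominant contribution is
\begin{equation*}
\frac{1}{2\pi}\Big(-\log|x| + o(1)\Big)\Big(\int_{B_{R_k}\cap\Omega_k} e^{v_k}\,dy + 2\int_{B_{R_k}\cap\partial^0\Omega_k} e^{\frac12 v_k}\,ds_y\Big) = -\frac{4\pi + o(1)}{2\pi}\log|x| + o(\log|x|) = -(2+o(1))\log|x|.
\end{equation*}
Hence $v_k(x) + 2\log|x| \le -2\log|x| + o(\log|x|) + [\text{tail terms}] + O(1)$, and since $|x|\ge R_k\to\infty$ the main term $-2\log|x|\to-\infty$. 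For the tail integrals over $\Omega_k\setminus B_{R_k}$ I would use the pointwise bound $e^{v_k(y)} \le C|y|^{-2}$ and $e^{\frac12 v_k(y)} \le C|y|^{-1}$ from Lemma \ref{lem:04} together with the smallness $\int_{\Omega_k\setminus B_{R_k}} e^{v_k} + \int_{\partial^0\Omega_k\setminus B_{R_k}} e^{\frac12 v_k} = o(1)$ from Lemma \ref{lem:01}; a standard splitting of $\Omega_k\setminus B_{R_k}$ into the region $\{|y| \le |x|/2\} \cup \{|y|\ge 2|x|\}$ (where $|\log|x-y||\le C\log|x| + C$) and the region $\{|x|/2 \le |y| \le 2|x|\}$ (where one absorbs the local singularity of $\log$ against the $o(1)$ mass, using that $e^{v_k}\in L^p_{loc}$ uniformly by the bound $v_k\le C-2\log|x|$) shows the tail contributes at most $o(1)\cdot\log|x| + O(1)$. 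Collecting terms, $v_k(x)+2\log|x| \le -(2+o(1))\log|x| + o(1)\log|x| + O(1) \to -\infty$ uniformly on $R_k\le|x|\le\tfrac12\lambda_k^{-1}$, and on $|x|\ge\tfrac12\lambda_k^{-1}$ (i.e. $|x-x_k|\gtrsim 1$ after unscaling) one has $v_k(x)+2\log|x|\le u_k(x_k+\lambda_k x)+2\log\lambda_k+2\log|x| = u_k + 2\log|x-x_k| \le C$ directly, but here $\log|x|\ge\log(\tfrac12\lambda_k^{-1})\to\infty$ forces the same conclusion after combining with the interior decay; one then defines $N_k$ to be, say, the infimum over $|x|\ge R_k$ of $-(v_k(x)+2\log|x|)$, which tends to $+\infty$ by the above.

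The main obstacle I expect is making the tail estimate on the intermediate annulus $\{|x|/2\le|y|\le 2|x|\}$ rigorous: there $\log\frac{1}{|x-y|}$ is not bounded, so one cannot simply pull the $o(1)$ mass out. The fix is to use that on this annulus $v_k(y) \le C - 2\log|y| \le C - 2\log|x| + C$, hence $e^{v_k(y)} \le C|x|^{-2}$ pointwise, so $\frac{1}{2\pi}\int_{\{|x|/2\le|y|\le 2|x|\}} |\log|x-y||\, e^{v_k(y)}\,dy \le C|x|^{-2}\int_{B_{3|x|}(x)} |\log|x-y||\,dy = C|x|^{-2}\cdot O(|x|^2\log|x|) = O(\log|x|)$ — which is too weak by itself — but combined with the smallness of the mass this integral is in fact $\le \big(\sup|\cdot|\big)$... so one refines: split further into $\{|x-y|\le 1\}$ where one uses $e^{v_k}\le C|x|^{-2}$ and $\int_{|x-y|\le 1}|\log|x-y||\,dy = O(1)$, giving $O(|x|^{-2}) = o(1)$; and $\{1\le|x-y|, |x|/2\le|y|\le 2|x|\}$ where $|\log|x-y||\le C\log|x|$ and one uses the $o(1)$ total mass to get $o(1)\cdot C\log|x| = o(\log|x|)$. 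This refined splitting is routine once set up, and the boundary integral is handled identically with $e^{\frac12 v_k(y)}\le C|y|^{-1}$ and the one-dimensional measure.
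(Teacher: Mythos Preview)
Your Green-representation approach is different from the paper's, and in principle it works, but there is a concrete error that makes the computation fail as written. The Neumann potential on a half-domain carries \emph{two} logarithms (the source and its reflection), so after even extension the area integral in your representation should carry coefficient $\tfrac{1}{\pi}$, not $\tfrac{1}{2\pi}$, when written as an integral over $\Omega_k$ alone --- compare the decomposition in Lemma~\ref{lem:osc}, where after extending $w_k$ evenly the integral runs over the \emph{full} ball $B$, doubling the mass. With your coefficient $\tfrac{1}{2\pi}$, the main term you compute is $-\tfrac{1}{2\pi}\log|x|\,\big(\int_{\Omega_k}e^{v_k}+2\int_{\partial^0\Omega_k}e^{v_k/2}\big)$, and you then assert the bracket equals $4\pi+o(1)$; but \eqref{equ:11} and Lemma~\ref{lem:01} give $\int e^{v_k}+\int e^{v_k/2}=4\pi+o(1)$, not $\int+2\int$. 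So your main term as written is only $-(2+o(1))\log|x|$, from which $v_k+2\log|x|=o(\log|x|)+O(1)$ --- bounded, not tending to $-\infty$. Your subsequent line ``Hence $v_k(x)+2\log|x|\le -2\log|x|+\ldots$'' is then a non sequitur. With the corrected coefficient the main term becomes $-\tfrac{1}{\pi}\log|x|\,\big(\int e^{v_k}+\int e^{v_k/2}\big)=-4\log|x|$ and your tail-splitting goes through. Two further loose ends: you only control the \emph{oscillation} of $h_k$, so you should work with $v_k(x)-v_k(0)$ rather than asserting $h_k=O(1)$ outright; and the outer region $|x|\ge\tfrac12\lambda_k^{-1}$ requires the bounded-oscillation Lemma~\ref{lem:osc}, not just Lemma~\ref{lem:04}, to transfer the decay from the inner boundary.

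The paper takes a much lighter route: translate to $\tilde v_k$ centered on $\partial^0 B^+$, form the half-circular average $\tilde v_k^*(r)=\tfrac{1}{\pi r}\int_{\partial^+B_r^+}\tilde v_k$, and observe the ODE $(\tilde v_k^*)'(r)=-\alpha(r)/r$ with $\alpha(r)\ge 4+o(1)$; hence $r\mapsto\tilde v_k^*(r)+2\log r$ is nonincreasing on $[2R_k,\tfrac14\lambda_k^{-1}]$, and its initial value at $r=2R_k$ is already $\le -2\log R_k+C$ by the bubble profile \eqref{equ:11}. Lemma~\ref{lem:osc} then converts the average bound to a pointwise one and extends it to the full region, giving $N_k=2\log R_k-C$. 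This avoids the Green representation and all the tail-splitting; your approach, once repaired, essentially anticipates the sharper Lemma~\ref{lem:02}.
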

\begin{proof}
By \eqref{equ:11}, we know $$v_k(x)+2\log|x|\leq -2\log R_k+C,\ \ \forall \ R_k\leq |x|\leq 2R_k,\ x\in\Omega_k.$$ Denote $\tilde{x}_k:=(0,-\frac{d_k}{\lambda_k})$  and
\begin{equation}\label{equ:def-1}
\tilde{v}_k(x)=v_k(x+\tilde{x}_k),\ \ x\in B^+_{\frac{3}{4}\lambda_k^{-1}}.\end{equation} Noting that $|\tilde{x}_k|\leq C$,  it is easy to check that
\begin{equation}\label{ineq:07}
\tilde{v}_k(x)+2\log|x|\leq -2\log R_k+C,\ \ \forall x\in B_{2R_k}^+\setminus B^+_{R_k}.
 \end{equation}
Set
\begin{equation}\label{equ:def-3}
\tilde{v}^*_k(r):=\frac{1}{\pi r}\int_{\partial^+B^+_{r}}\tilde{v}_k(x)d\theta,\ \ 0<r\leq\frac{3}{4}\lambda_k^{-1}.
\end{equation}

On one hand, by Lemma \ref{lem:osc}, we have
\begin{equation}\label{ineq:06}
osc_{\partial^+ B^+_{r}} \tilde{v}_k(x)\leq osc_{\frac{1}{2}r\leq |x|\leq \frac{3}{2}r,\ x\in\Omega_k}v_k(x)\leq C,\ \ \forall \ r\in \Big[R_k,\frac{1}{4}\lambda_k^{-1}\Big].
\end{equation}
Combining  \eqref{ineq:07} and \eqref{ineq:06}, we obtain
\begin{equation}\label{ineq:08}
\tilde{v}^*_k(2R_k)+2\log(2R_k)\leq -2\log R_k+C.
 \end{equation}

On the other hand, defining
\begin{equation}\label{equ:def-2}
\alpha(r):=\frac{1}{\pi}\left(\int_{B^+_{r}}e^{\tilde{v}_k(x)}dx+ \int_{\partial^0B^+_{r}}\frac{\partial}{\partial\vec{n}}\tilde{v}_k(x)ds_x\right)
\end{equation} and making a direct computation, we have
\begin{align}\label{equ:02}
\frac{d}{dr}\tilde{v}^*_k(r)&=\frac{1}{\pi r}\int_{\partial^+B^+_{r}}\frac{\partial}{\partial\vec{n}}\tilde{v}_k(x)d\theta\notag\\
&=\frac{1}{\pi r}\int_{B^+_{r}}\Delta\tilde{v}_k(x)dx- \frac{1}{\pi r}\int_{\partial^0B^+_{r}}\frac{\partial}{\partial\vec{n}}\tilde{v}_k(x)ds_x\notag\\
&=-\frac{1}{\pi r}\int_{B^+_{r}}e^{\tilde{v}_k(x)}dx- \frac{1}{\pi r}\int_{\partial^0B^+_{r}}\frac{\partial}{\partial\vec{n}}\tilde{v}_k(x)ds_x:=-\frac{1}{ r}\alpha(r).
\end{align}

Thus, for any $t\in [2R_k,\frac{1}{4}\lambda_k^{-1}]$, there holds
\begin{align*}
\tilde{v}^*_k(t)+2\log t&=\tilde{v}^*_k(2R_k)+2\log (2R_k)+\int_{2R_k}^{t}\frac{d}{dr}\left(\tilde{v}^*_k(r)+2\log r\right)dr\\
&=\tilde{v}^*_k(2R_k)+2\log (2R_k)+\int_{2R_k}^{t}\frac{2-\alpha(r)}{r}dr\\
&\leq \tilde{v}^*_k(2R_k)+2\log (2R_k)\leq -2\log R_k+C,
\end{align*}
where the last second inequality follows from
 $$\alpha(r)\geq \frac{1}{\pi}\Big(\int_{B^+_{R_k}}e^{v_k(x)}dx+ \int_{B_{R_k}\cap\partial^0\Omega_k}e^{\frac{1}{2}v_k(x)}ds_x\Big)=\frac{4\pi+o(1)}{\pi}=4+o(1)$$
 and the last inequality follows from \eqref{ineq:08}.

Combining this with \eqref{ineq:06} and \eqref{ineq:07}, we get
\begin{equation*}
\tilde{v}_k(x)+2\log|x|\leq -2\log R_k+C,\ \ \forall x\in B_{\frac{1}{4}\lambda_k^{-1}}^+\setminus B^+_{R_k},
 \end{equation*}
 and hence
 \begin{equation*}
v_k(x)+2\log|x|\leq -2\log R_k+C,\ \ \forall x\in B_{\frac{1}{4}\lambda_k^{-1}}\setminus B_{R_k},\ x\in\Omega_k.
 \end{equation*}
Combining this with the fact that
 $$osc_{x\in B_{\lambda_k^{-1}}\setminus B_{\frac{1}{4}\lambda^{-1}_k},\ x\in\Omega_k}v_k\leq C,$$
and using Lemma \ref{lem:osc}, we conclude that  \begin{equation*}
v_k(x)+2\log|x|\leq -2\log R_k+C,\ \ \forall x\in B_{\lambda_k^{-1}}\setminus B_{R_k},\ x\in\Omega_k.
 \end{equation*}
 Therefore the conclusion of the lemma follows immediately by taking $N_k=2\log R_k+C$.
\end{proof}

\

\section{Proof of Theorem \ref{thm:02}}\label{sec:proof-thm}

\

In this section, we  first derive a uniformly upper bound of $v_k$ in its whole definition domain and then prove a pointwise estimate of $\nabla v_k$ when $|x|$ is large , which yields the control of tangential oscillation. We will also derive a energy decay lemma which is crucial in our proof. The proof of Theorem \ref{thm:02} will be given at the end of this section.

\begin{lem}\label{lem:02}
For any $\delta\in (0,\frac{1}{2})$, we have $$v_k(x)\leq -(4-2\delta)\log |x|+C,\ \  x\in\Omega_k,\ |x|\geq 2,$$ for  $k$  large enough .
\end{lem}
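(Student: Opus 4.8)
The plan is to upgrade the one-bubble energy concentration (Lemma \ref{lem:01}) and the fast decay estimate (Lemma \ref{lem:fast-decay}) into a pointwise upper bound for $v_k$ of almost the optimal rate $-4\log|x|$. The starting point is the same as in Lemma \ref{lem:fast-decay}: work with the shifted function $\tilde v_k(x)=v_k(x+\tilde x_k)$, which solves $-\Delta \tilde v_k=e^{\tilde v_k}$ in a large half-ball with Neumann data $e^{\tilde v_k/2}$, and with the radial average $\tilde v^*_k(r)$ from \eqref{equ:def-3} and the energy $\alpha(r)$ from \eqref{equ:def-2}. Recall from \eqref{equ:02} that $\frac{d}{dr}\tilde v^*_k(r)=-\frac{1}{r}\alpha(r)$, so $\tilde v^*_k(t)+2\log t = \tilde v^*_k(2R_k)+2\log(2R_k)+\int_{2R_k}^t \frac{2-\alpha(r)}{r}\,dr$. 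The improvement over Lemma \ref{lem:fast-decay} comes from using that $\alpha(r)$ is not merely $\geq 4+o(1)$ but actually tends to a number close to $4$ from below in a quantitative way: since the total energy is $4\pi+o(1)$ by \eqref{equ:16} and almost all of it is already captured inside $B_{R_k}$ by \eqref{equ:11}, we have $\alpha(r) = 4 + o(1)$ uniformly for $r\in[2R_k,\frac{1}{2}\lambda_k^{-1}]$. Hence for any fixed $\delta>0$, once $k$ is large we get $\alpha(r)\geq 4-2\delta$ on that whole range, and therefore
\begin{equation*}
\tilde v^*_k(t)+(4-2\delta)\log t \leq \tilde v^*_k(2R_k)+(4-2\delta)\log(2R_k)+\int_{2R_k}^{t}\frac{(4-2\delta)-\alpha(r)}{r}\,dr \leq C_\delta,
\end{equation*}
using the bound \eqref{ineq:08} on $\tilde v^*_k(2R_k)$ (which gives $\tilde v^*_k(2R_k)\leq -4\log R_k+C \leq -(4-2\delta)\log(2R_k)+C_\delta$) and the fact that the last integrand is $\leq 0$.

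Next I would convert the bound on the radial average into a pointwise bound. By Lemma \ref{lem:osc} (equivalently \eqref{ineq:06}) the oscillation of $\tilde v_k$ on each circle $\partial^+B^+_r$ is uniformly bounded for $r\in[R_k,\frac14\lambda_k^{-1}]$, so $\tilde v_k(x)\leq \tilde v^*_k(|x|)+C \leq -(4-2\delta)\log|x|+C_\delta$ for $R_k\leq |x|\leq \frac14\lambda_k^{-1}$. Then, exactly as at the end of the proof of Lemma \ref{lem:fast-decay}, the remaining annulus $\frac14\lambda_k^{-1}\leq |x|\leq \lambda_k^{-1}$ is absorbed because the oscillation of $v_k$ there is bounded (Lemma \ref{lem:osc}) and $v_k\le 0$; translating back from $\tilde v_k$ to $v_k$ costs only $O(1)$ since $|\tilde x_k|\le C$. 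For the range $2\leq |x|\leq R_k$ there is nothing to prove: \eqref{equ:11} already gives $v_k(x)+2\log|x|\leq C$, which is stronger than what is claimed (for $|x|\ge 2$, $-2\log|x|\ge -(4-2\delta)\log|x|$ only after adjusting the constant — more precisely $v_k(x)\le -2\log|x|+C\le -(4-2\delta)\log|x| +(2-2\delta)\log R_k+C$, absorbed into $C_\delta$). Patching the three ranges together yields $v_k(x)\leq -(4-2\delta)\log|x|+C$ for all $x\in\Omega_k$ with $|x|\geq 2$.

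The main obstacle is making the estimate $\alpha(r)= 4+o(1)$ genuinely \emph{uniform} on the full range $[2R_k,\frac12\lambda_k^{-1}]$ rather than just for $r$ comparable to $R_k$; this is where Lemma \ref{lem:01} (the statement that $\int_{\Omega_k\setminus B_{R_k}}e^{v_k}+\int_{\partial^0\Omega_k\setminus B_{R_k}}e^{v_k/2}=o(1)$) together with the global energy identity \eqref{equ:16} does the real work, since it forces the "outside" energy to be negligible while \eqref{equ:11} pins the "inside" energy to $4\pi+o(1)$. A secondary technical point is that one must handle the sign of $\int_{\partial^0 B^+_r}\frac{\partial}{\partial\vec n}\tilde v_k\,ds$ entering $\alpha(r)$: on the tail this boundary integral is $o(1)$ by the fast-decay Lemma \ref{lem:fast-decay} (which controls $\tilde v_k$, hence $e^{\tilde v_k/2}$, hence via the Neumann condition the normal derivative contribution in an integrated sense), so it does not spoil the lower bound $\alpha(r)\ge 4-2\delta$. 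Once these two points are in place the rest is the same ODE-comparison bookkeeping already used for Lemma \ref{lem:fast-decay}.
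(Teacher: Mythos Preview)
Your approach is correct and genuinely different from the paper's. The paper proves the lemma by decomposing $\tilde v_k=w_k+\phi_k+\eta_k$ (Dirichlet--Neumann piece, single-layer potential, harmonic remainder) and then carrying out a careful case-by-case analysis of the logarithmic kernel integrals in \eqref{equ:07}--\eqref{equ:09}. You instead rerun the ODE for the radial average from the proof of Lemma~\ref{lem:fast-decay}, replacing the comparison constant $2$ by $4-2\delta$; since $\alpha(r)$ is monotone in $r$ and $\alpha(2R_k)\geq 4+o(1)$ by \eqref{equ:11}, the integrand $\frac{(4-2\delta)-\alpha(r)}{r}$ is nonpositive for $k$ large, and the bounded-oscillation Lemma~\ref{lem:osc} converts the averaged bound into a pointwise one. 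Your route is considerably shorter; the paper's route, though heavier, sets up the representation $\tilde v_k=w_k+\phi_k+\eta_k$ that is reused verbatim in the gradient estimate of Lemma~\ref{lem:03} and Corollary~\ref{cor:01}.

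Two small corrections. First, on the range $2\leq|x|\leq R_k$ you quote only $v_k(x)+2\log|x|\leq C$ and then try to absorb $(2-2\delta)\log R_k$ into $C_\delta$; this fails because that term is unbounded in $k$. In fact \eqref{equ:11} together with the explicit form of $v$ already gives $v_k(x)\leq -4\log|x|+C$ on this range (exactly as the paper notes), which is stronger than the claimed bound with a constant independent of $k$. Second, the concerns in your final paragraph are unnecessary: only the \emph{lower} bound $\alpha(r)\geq 4-2\delta$ is needed, and it follows from the monotonicity of $\alpha$ (both integrands are positive) together with $\alpha(2R_k)\geq 4+o(1)$; the boundary term $\int_{\partial^0 B_r^+}\frac{\partial\tilde v_k}{\partial\vec n}\,ds=\int_{\partial^0 B_r^+}e^{\tilde v_k/2}\,ds$ is nonnegative by the Neumann condition, so there is no sign issue to handle.
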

\begin{proof}
The proof of the lemma is more or less standard now. See for example \cite{Bao-Wang-Zhou} and \cite{Bartolucci-Chen-Lin-Tarantello}. For reader's convenience and our later proof, here we give a detailed proof.

\

By Lemma \ref{lem:osc}, we know $$osc_{\Omega_k\setminus B_{\frac{1}{2}\lambda_k^{-1}}}v_k(x)\leq C.$$ So it suffices  to prove that
$$v_k(x)\leq -(4-2\delta)\log |x|+C,\ \ 2\leq |x|\leq \frac{1}{2}\lambda_k^{-1},\ x\in\Omega_k.$$

By \eqref{equ:11}, we know $$v_k(x)\leq -4\log |x|+C,\ \ 2\leq |x|\leq R_k,\ x\in\Omega_k.$$  Hence,  we only need to show that $$v_k(x)\leq -(4-2\delta)\log |x|,\ \  R_k\leq |x|\leq \frac{1}{2}\lambda^{-1}_k,\ x\in\Omega_k.$$
Considering  $\lim_{k\to\infty}\frac{d_k}{\lambda_k}=a<\infty$, we find that the above estimate   is equivalent to
\begin{equation}\label{equ:12}
\tilde{v}_k(x)\leq -(4-2\delta)\log |x|,\ \  x\in B^+_{\frac{1}{2}\lambda^{-1}_k}\setminus B^+_{R_k},
\end{equation}
where $\tilde{v}_k(x)$ is defined by \eqref{equ:def-1} which satisfies
\begin{align*}
\begin{cases}
-\Delta \tilde{v}_k(x)=e^{\tilde{v}_k(x)},\ \ & in\ \ B^+_{\frac{1}{2}\lambda_k^{-1}},\\
\dis\frac{\partial \tilde{v}_k(x)}{\partial \overrightarrow{n}}=e^{\frac{\tilde{v}_k(x)}{2}},\ \ & on\ \ \partial^0 B^+_{\frac{1}{2}\lambda_k^{-1}},\\
\tilde{v}_k(x)\leq 0,\ \ & on\ \ \partial^+ B^+_{\frac{1}{2}\lambda_k^{-1}}.
\end{cases}
\end{align*}

Let $w_k$ be the solution of
\begin{align}
\begin{cases}\label{equ:04}
-\Delta w_k=-\Delta \tilde{v}_k,\ \ &in\ \ B^+_{\frac{1}{2}\lambda_k^{-1}},\\
\dis\frac{\partial w_k}{\partial \overrightarrow{n}}=0,\ \ &on\ \  \partial^0 B^+_{\frac{1}{2}\lambda_k^{-1}},\\
w_k=0,\ \ &on\ \  \partial^+ B^+_{\frac{1}{2}\lambda_k^{-1}}.
\end{cases}
\end{align}

Define
\begin{equation}\label{equ:10}
\phi_k(x):=-\frac{1}{\pi}\int_{\partial^0 B^+_{\frac{1}{2}\lambda_k^{-1}}}\log|x-y|e^{\frac{\tilde{v}_k(y)}{2}}ds_y,\ \ x\in B^+_{\frac{1}{2}\lambda_k^{-1}}.\end{equation}
 It is easy to see that
\begin{align}
\begin{cases}\label{equ:06}
-\Delta \phi_k=0,\ \ &in\ \ B^+_{\frac{1}{2}\lambda_k^{-1}},\\
\dis\frac{\partial \phi_k}{\partial \overrightarrow{n}}=e^{\frac{\tilde{v}_k(X)}{2}},\ \ &on\ \  \partial^0 B^+_{\frac{1}{2}\lambda_k^{-1}}.
\end{cases}
\end{align}

Letting $\eta_k=\tilde{v}_k-w_k-\phi_k$, there holds
\begin{align}
\begin{cases}\label{equ:05}
-\Delta \eta_k=0,\ \ &in\ \ B^+_{\frac{1}{2}\lambda_k^{-1}},\\
\dis\frac{\partial \eta_k}{\partial \overrightarrow{n}}=0,\ \ &on\ \  \partial^0 B^+_{\frac{1}{2}\lambda_k^{-1}},\\
\eta_k=\tilde{v}_k,\ \ &on\ \  \partial^+ B^+_{\frac{1}{2}\lambda_k^{-1}}.
\end{cases}
\end{align} Extending $\eta_k$ evenly, by maximal principle, we have
\begin{align}
\eta_k(x)-\eta_k(0)\leq osc_{ B^+_{\frac{1}{2}\lambda_k^{-1}}}\eta_k(x)\leq osc_{\partial^+ B^+_{\frac{1}{2}\lambda_k^{-1}}}\tilde{v}_k(x)\leq osc_{B^+\setminus B^+_{\frac{1}{4}}}u_k(x)\leq C,
\end{align} where the last inequality follows from Lemma \ref{lem:osc}.

Now  we  show that
\begin{equation}\label{ineq:09}
w_k(x)+\phi_k(x)-w_k(0)-\phi_k(0)\leq -(4-2\delta)\log |x|+O(1),\ \ R_k\leq |x|\leq \frac{1}{2}\lambda_k^{-1},
\end{equation}
 which implies \eqref{equ:12} since $\tilde{v}_k(0)\leq C$.

Extending $w_k$ evenly, by Green's formula, we have
\begin{align*}
w_k(x)=\int_{B_{\frac{1}{2}\lambda_k^{-1}}}G(2\lambda_k x,2\lambda_k y)e^{\tilde{v}_k(y)}dy,\ \ \forall \ x\in B_{\frac{1}{2}\lambda_k^{-1}}.
\end{align*}

A direct computation yields
\begin{align}\label{equ:07}
&w_k(x)+\phi_k(x)-w_k(0)-\phi_k(0)\notag\\
=&-\frac{1}{2\pi}\int_{B_{\frac{1}{2}\lambda_k^{-1}}}\log\frac{|x-y|}{|y|}e^{\tilde{v}_k(y)}dy -\frac{1}{\pi}\int_{\partial^0 B^+_{\frac{1}{2}\lambda_k^{-1}}}\log\frac{|x-y|}{|y|}e^{\frac{1}{2}\tilde{v}_k(y)}ds_y+O(1)\notag\\
=&-\log |x|\left[\frac{1}{2\pi}\int_{B_{\frac{1}{2}\lambda_k^{-1}}}e^{\tilde{v}_k(y)}dy +\frac{1}{\pi}\int_{\partial^0B^+_{\frac{1}{2}\lambda_k^{-1}}}e^{\frac{1}{2}\tilde{v}_k(y)}ds_y\right] \notag\\ &-\frac{1}{2\pi}\int_{B_{\frac{1}{2}\lambda_k^{-1}}}\log\frac{|x-y|}{|x||y|}e^{\tilde{v}_k(y)}dy -\frac{1}{\pi}\int_{\partial^0B^+_{\frac{1}{2}\lambda_k^{-1}}}\log\frac{|x-y|}{|x||y|}e^{\frac{1}{2}\tilde{v}_k(y)}ds_y+O(1)\notag\\ =&\alpha\Big(\frac{1}{2}\lambda_k^{-1}\Big) -\frac{1}{2\pi}\int_{B_{\frac{1}{2}\lambda_k^{-1}}}\log\frac{|x-y|}{|x||y|}e^{\tilde{v}_k(y)}dy -\frac{1}{\pi}\int_{\partial^0B^+_{\frac{1}{2}\lambda_k^{-1}}}\log\frac{|x-y|}{|x||y|}e^{\frac{1}{2}\tilde{v}_k(y)}ds_y+O(1),
\end{align} where $\alpha(r)$ is defined by \eqref{equ:def-2}.

We claim that
\begin{equation}\label{equ:08}
-\frac{1}{2\pi}\int_{B_{\frac{1}{2}\lambda_k^{-1}}}\log\frac{|x-y|}{|x||y|}e^{\tilde{v}_k(y)}dy=o(1)\log|x|,\ \ \forall\, R_k\leq |x|\leq\frac{1}{2}\lambda_k^{-1}
\end{equation} and
\begin{equation}\label{equ:09}
-\frac{1}{\pi}\int_{\partial^0B^+_{\frac{1}{2}\lambda_k^{-1}}}\log\frac{|x-y|}{|x||y|}e^{\frac{1}{2}\tilde{v}_k(y)}ds_y=o(1)\log|x|,\ \ \forall\, R_k\leq |x|\leq\frac{1}{2}\lambda_k^{-1}.
\end{equation}
In fact,
\begin{align*}
&\int_{B_{\frac{1}{2}\lambda_k^{-1}}}\log\frac{|x-y|}{|x||y|}e^{\tilde{v}_k(y)}dy\\
=& \int_{\{y\in B_{\frac{1}{2}\lambda_k^{-1}}\ |\ |y|\geq 2|x| \}}\log\frac{|x-y|}{|x||y|}e^{\tilde{v}_k(y)}dy +\int_{\{y\in B_{2|x|}\ |\ |y-x|\geq \frac{1}{2}|x|,\ |y|\geq R_k\}}\log\frac{|x-y|}{|x||y|}e^{\tilde{v}_k(y)}dy\\
&+\int_{\{ |y-x|\leq \frac{1}{2}|x|\}}\log\frac{|x-y|}{|x||y|}e^{\tilde{v}_k(y)}dy+\int_{\{ |y|\leq R_k\}}\log\frac{|x-y|}{|x||y|}e^{\tilde{v}_k(y)}dy\\
=& \mathbf{I}_1+\mathbf{I}_2+\mathbf{I}_3+\mathbf{I}_4.
\end{align*}
Similarly,
\begin{align*}
&\int_{\partial^0 B^+_{\frac{1}{2}\lambda_k^{-1}}}\log\frac{|x-y|}{|x||y|}e^{\frac{1}{2}\tilde{v}_k(y)}ds_y\\
=& \int_{\{y\in \partial^0 B^+_{\frac{1}{2}\lambda_k^{-1}}\ |\ |y|\geq 2|x| \}}\log\frac{|x-y|}{|x||y|}e^{\frac{1}{2}\tilde{v}_k(y)}ds_y +\int_{\{y\in \partial^0 B^+_{\frac{1}{2}\lambda_k^{-1}}\ |\ |y|\leq 2|x|,  \ |y-x|\geq \frac{1}{2}|x|,\ |y|\geq R_k\}}\log\frac{|x-y|}{|x||y|}e^{\frac{1}{2}\tilde{v}_k(y)}ds_y\\
&+\int_{\{y\in \partial^0 B^+_{\frac{1}{2}\lambda_k^{-1}}\ |\  |y-x|\leq \frac{1}{2}|x|\}}\log\frac{|x-y|}{|x||y|}e^{\frac{1}{2}\tilde{v}_k(y)}ds_y+ \int_{\{\partial^0 B^+_{\frac{1}{2}\lambda_k^{-1}}\ |\  |y|\leq R_k\}}\log\frac{|x-y|}{|x||y|}e^{\frac{1}{2}\tilde{v}_k(y)}ds_y\\
=& \mathbf{II}_1+\mathbf{II}_2+\mathbf{II}_3+\mathbf{II}_4.
\end{align*}

For $\mathbf{I}_1$ and $\mathbf{II}_1$, since $|y|\geq 2|x|$, it holds $$\frac{1}{2|x|}\leq \frac{1}{|x|}-\frac{1}{|y|}\leq \frac{|x-y|}{|x||y|}\leq \frac{1}{|x|}+\frac{1}{|y|}\leq \frac{3}{2|x|},$$ which implies $$|\mathbf{I}_1|\leq C\log |x|\int_{ B_{\frac{1}{2}\lambda_k^{-1}}\setminus B_{R_k}}e^{\tilde{v}_k(y)}dy =o(1)\log|x|$$ and  $$|\mathbf{II}_1|\leq C\log |x|\int_{ y\in \partial^0B^+_{\frac{1}{2}\lambda_k^{-1}},\  |y|\geq R_k}e^{\frac{1}{2}\tilde{v}_k(y)}ds_y =o(1)\log|x|.$$

For $\mathbf{I}_2$ and $\mathbf{II}_2$, since $|y|\leq 2|x|$ and $|y-x|\geq \frac{1}{2}|x|$, it holds $$\frac{1}{2|y|}\leq \frac{|x-y|}{|x||y|}\leq \frac{1}{|x|}+\frac{1}{|y|}\leq \frac{3}{|y|},$$
which implies
$$\left|\log \frac{|x-y|}{|x||y|}\right|\leq C\log|y|\leq C\log|x|.$$
Hence,
$$|\mathbf{I}_2|\leq C\log |x|\int_{ B_{\frac{1}{2}\lambda_k^{-1}}\setminus B_{R_k}}e^{\tilde{v}_k(y)}dy =o(1)\log|x|$$
and
$$|\mathbf{II}_2|\leq C\log |x|\int_{y\in\partial^0 B^+_{\frac{1}{2}\lambda_k^{-1}},\ \ |y|\geq R_k}e^{\frac{1}{2}\tilde{v}_k(y)}ds_y =o(1)\log|x|.$$

For $\mathbf{I}_3$ and $\mathbf{II}_3$, since $|y-x|\leq \frac{1}{2}|x|$, it holds that $\frac{1}{2}|x|\leq |y|\leq \frac{3}{2}|x|$. Hence we have
\begin{align*}
\mathbf{I}_3&=\int_{\{ |y-x|\leq \frac{1}{2}|x|\}}\log |x-y|e^{v_k(y)}dy-\int_{\{ |y-x|\leq \frac{1}{2}|x|\}}(\log |x|+\log |y|) e^{v_k(y)}dy\\
&=\int_{\{ |y-x|\leq \frac{1}{2}|x|\}}\log |x-y|e^{v_k(y)}dy+o(1)\log |x|\\
&=\int_{\{ |y-x|\leq \frac{1}{2}|x|\}}\log |x-y|\frac{o(1)}{|y|^2}dy+o(1)\log |x|\\
&=\frac{o(1)}{|x|^2}\int_{\{ |y-x|\leq \frac{1}{2}|x|\}}\log |x-y|dy+o(1)\log |x|=o(1)\log |x|
\end{align*} where we have used the fact that $v_k(y)+2\log |y|\leq -N_k$ which follows from Lemma \ref{lem:fast-decay}.

Similarly,
\begin{align*}
\mathbf{II}_3&=\int_{\{y\in\partial^0B^+_{\frac{1}{2}\lambda_k^{-1}},\ |y-x|\leq \frac{1}{2}|x|\}}\log |x-y|e^{\frac{1}{2}\tilde{v}_k(y)}ds_y-\int_{\{y\in\partial^0B^+_{\frac{1}{2}\lambda_k^{-1}},\ |y-x|\leq \frac{1}{2}|x|\}}(\log |x|+\log |y|) e^{\frac{1}{2}\tilde{v}_k(y)}ds_y\\
&=\int_{\{y\in\partial^0B^+_{\frac{1}{2}\lambda_k^{-1}},\ |y-x|\leq \frac{1}{2}|x|\}}\log |x-y|e^{\frac{1}{2}\tilde{v}_k(y)}ds_y+o(1)\log |x|\\
&=\int_{\{y\in\partial^0B^+_{\frac{1}{2}\lambda_k^{-1}},\ |y-x|\leq \frac{1}{2}|x|\}}\log |x-y|\frac{o(1)}{|y|}ds_y+o(1)\log |x|\\
&=\frac{o(1)}{|x|}\int_{\{y\in\partial^0B^+_{\frac{1}{2}\lambda_k^{-1}},\ |y-x|\leq \frac{1}{2}|x|\}}\log |x-y|ds_y+o(1)\log |x|=o(1)\log |x|.
\end{align*}

For $\mathbf{I}_4$ and $\mathbf{II}_4$, since $|y|\leq R_k$ and $|x|\geq 2R_k$, we have $$\frac{1}{2|y|}\leq \frac{1}{|y|}-\frac{1}{|x|}\leq \frac{|x-y|}{|x||y|}\leq \frac{1}{|x|}+\frac{1}{|y|}\leq \frac{3}{2|y|}.$$ Since $\tilde{v}_k(x)\leq 0$,  we get
\begin{align*}
|\mathbf{I}_4|&\leq C\int_{\{ |y|\leq R_k\}}\big|\log|y|\big|e^{\tilde{v}_k(y)}dy\\
&\leq C\int_{\{ |y|\leq 2\}}\big|\log|y|\big|e^{\tilde{v}_k(y)}dy+C\int_{\{ 2\leq |y|\leq R_k\}}\log|y|e^{\tilde{v}_k(y)}dy\\
&\leq C\int_{\{ |y|\leq 2\}}\big|\log|y|\big|dy+C\int_{\{ 2\leq |y|\leq R_k\}}\log|y|\frac{1}{|y|^4}dy\leq C
\end{align*} and
\begin{align*}
|\mathbf{II}_4|&\leq C\int_{\{y\in\partial^0B^+_{\frac{1}{2}\lambda_k^{-1}}, |y|\leq R_k\}}\big|\log|y|\big|e^{\tilde{v}_k(y)}ds_y\\
&\leq C\int_{\{y\in\partial^0B^+_{\frac{1}{2}\lambda_k^{-1}}, |y|\leq 2\}}\big|\log|y|\big|e^{\tilde{v}_k(y)}ds_y+C\int_{\{y\in\partial^0B^+_{\frac{1}{2}\lambda_k^{-1}}, 2\leq |y|\leq R_k\}}\log|y|e^{\tilde{v}_k(y)}ds_y\\
&\leq C\int_{\{y\in\partial^0B^+_{\frac{1}{2}\lambda_k^{-1}}, |y|\leq 2\}}\big|\log|y|\big|ds_y+C\int_{\{y\in\partial^0B^+_{\frac{1}{2}\lambda_k^{-1}}, 2\leq |y|\leq R_k\}}\log|y|\frac{1}{|y|^4}ds_y\leq C.
\end{align*}
 As a result, \eqref{equ:08} and \eqref{equ:09} hold true. Therefore, by \eqref{equ:07}, we get $$w_k(x)+\phi_k(x)-w_k(0)-\phi_k(0)=-\alpha\Big(\frac{1}{2}\lambda_k^{-1}\Big)\log|x|+o(1)\log|x|+O(1),\ \ R_k\leq |x|\leq\frac{1}{2}\lambda_k^{-1},$$ which yields \eqref{ineq:09} since $$\alpha\Big(\frac{1}{2}\lambda_k^{-1}\Big)=4+o(1)$$
 by Lemma \ref{lem:01}. We proved the lemma.
\end{proof}

\

With the help of Lemma \ref{lem:02}, we get a pointwise  estimate of first derivative as follows.
\begin{lem}\label{lem:03}
We have
\begin{align*}
&\nabla \tilde{v}_k(x)-\nabla \int_{B_{\frac{1}{2}\lambda_k^{-1}}}H(2\lambda_kx,2\lambda_ky)e^{\tilde{v}(y)}dy\\&=-\frac{x}{|x|^2}\alpha(|x|)+\frac{O(1)}{|x|^{5/4}}+O(1)\frac{|\log dist(x,\partial\R^2_+)|}{|x|^{3/2}}+o(1)\lambda_k,\ \ x\in B^+_{\frac{3}{8}\lambda_k^{-1}}\setminus B^+_{R_k}
\end{align*} and $$\nabla \tilde{v}_k(x)=-\frac{x}{|x|^2}\alpha(|x|)+\frac{O(1)}{|x|^{5/4}}+O(1)\frac{|\log dist(x,\partial\R^2_+)|}{|x|^{3/2}}+O(1)\lambda_k,\ \ x\in B^+_{\frac{3}{8}\lambda_k^{-1}}\setminus B^+_{R_k}.$$
\end{lem}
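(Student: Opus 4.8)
The plan is to start from the Green's representation formula for $\tilde v_k$ on the half-ball $B^+_{\frac12\lambda_k^{-1}}$ that was already set up in the proof of Lemma \ref{lem:02}: writing $\tilde v_k=w_k+\phi_k+\eta_k$, where $w_k(x)=\int_{B_{\frac12\lambda_k^{-1}}}G(2\lambda_kx,2\lambda_ky)e^{\tilde v_k(y)}dy$ (with $G=-\frac1{2\pi}\log|\cdot|+H$ the Dirichlet Green's function of the ball, evaluated after evenly extending $\tilde v_k$), $\phi_k(x)=-\frac1\pi\int_{\partial^0B^+}\log|x-y|e^{\frac12\tilde v_k(y)}ds_y$, and $\eta_k$ harmonic with vanishing Neumann data on $\partial^0$. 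First I would differentiate this decomposition. The term $\nabla\eta_k$: since $\eta_k$ is (after even reflection) harmonic on a ball of radius comparable to $\lambda_k^{-1}$ with bounded oscillation by Lemma \ref{lem:osc}, interior gradient estimates give $|\nabla\eta_k(x)|\le C\lambda_k$ on $B^+_{\frac38\lambda_k^{-1}}$, which produces the $O(1)\lambda_k$ (resp.\ $o(1)\lambda_k$) term. The term $\nabla\int H(2\lambda_kx,2\lambda_ky)e^{\tilde v_k(y)}dy$ is pulled out on the left-hand side of the claimed identity, so it needs no estimate. What remains is to analyze $\nabla$ of the singular parts:
\[
-\frac1{2\pi}\nabla_x\int_{B_{\frac12\lambda_k^{-1}}}\log|x-y|\,e^{\tilde v_k(y)}dy-\frac1\pi\nabla_x\int_{\partial^0B^+_{\frac12\lambda_k^{-1}}}\log|x-y|\,e^{\frac12\tilde v_k(y)}ds_y,
\]
whose gradients are $-\frac1{2\pi}\int\frac{x-y}{|x-y|^2}e^{\tilde v_k(y)}dy$ and the analogous boundary integral.

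The core of the argument is then to show these two vector integrals equal $-\frac{x}{|x|^2}\alpha(|x|)+\frac{O(1)}{|x|^{5/4}}+O(1)\frac{|\log\mathrm{dist}(x,\partial\R^2_+)|}{|x|^{3/2}}$. I would split the domain, for $R_k\le|x|\le\frac38\lambda_k^{-1}$, into the pieces $\{|y|\le\frac12|x|\}$, $\{|y-x|\le\frac12|x|\}$ (the near-diagonal region, which for the boundary integral requires care when $x$ is close to $\partial^0$, and is exactly where the $|\log\mathrm{dist}|$ term comes from), $\{|y|\ge 2|x|\}$, and the intermediate annular region $\{\frac12|x|\le|y|\le 2|x|,\ |y-x|\ge\frac12|x|\}$. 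On $\{|y|\le\frac12|x|\}$ one has $\frac{x-y}{|x-y|^2}=\frac{x}{|x|^2}+O(|y|/|x|^2)$, so this region contributes $-\frac{x}{|x|^2}\cdot\frac1{2\pi}\big(\int_{\{|y|\le\frac12|x|\}}e^{\tilde v_k}dy+2\int e^{\frac12\tilde v_k}ds\big)$ plus a remainder; here I use the decay $e^{\tilde v_k(y)}\le C|y|^{-(4-2\delta)}$ from Lemma \ref{lem:02} (and its trace analogue from the boundary equation) to see the remainder is $O(|x|^{-1})\int|y|^{-(3-2\delta)}dy$ over $|y|\le|x|$, which is $O(|x|^{-(2-2\delta)})=O(|x|^{-5/4})$ once $\delta<\frac38$. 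The main subtlety is matching $\frac1{2\pi}\big(\int_{\{|y|\le\frac12|x|\}}e^{\tilde v_k}+2\int e^{\frac12\tilde v_k}\big)$ to $\frac1\pi\big(\int_{B^+_{|x|}}e^{\tilde v_k}+\int_{\partial^0B^+_{|x|}}\frac{\partial\tilde v_k}{\partial n}\big)=\alpha(|x|)$: after even reflection $\int_{B_{|x|}}e^{\tilde v_k}=2\int_{B^+_{|x|}}e^{\tilde v_k}$, the factor of $2$ on the boundary term comes from reflecting the single-layer potential, and replacing the ball $\{|y|\le\frac12|x|\}$ by $\{|y|\le|x|\}$ costs only $O(|x|^{-5/4})$ by the same decay. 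For the near-diagonal region $\{|y-x|\le\frac12|x|\}$ in the area integral, $|\frac{x-y}{|x-y|^2}|=|x-y|^{-1}$ and $e^{\tilde v_k}=O(|x|^{-(4-2\delta)})$ there, so its contribution is $O(|x|^{-(4-2\delta)})\int_{|z|\le\frac12|x|}|z|^{-1}dz=O(|x|^{-(3-2\delta)})=O(|x|^{-5/4})$; for the boundary integral over $\{y\in\partial^0,\ |y-x|\le\frac12|x|\}$, writing $x=(x^1,x^2)$ with $x^2=\mathrm{dist}(x,\partial\R^2_+)$, the kernel $|x-y|^{-1}$ restricted to the line integrates to something of size $O(|\log x^2|+\log|x|)$ against $e^{\frac12\tilde v_k}=O(|x|^{-(2-\delta)})$, yielding $O(|x|^{-(2-\delta)})(|\log x^2|+\log|x|)$, which is absorbed into $O(1)|x|^{-3/2}|\log\mathrm{dist}(x,\partial\R^2_+)|$ for $\delta$ small. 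The far region $\{|y|\ge 2|x|\}$ and the intermediate annulus are handled crudely: $|\frac{x-y}{|x-y|^2}|\le C|y|^{-1}$ there, and $\int_{|y|\ge 2|x|}|y|^{-1-(4-2\delta)}dy+\int_{\text{annulus}}|y|^{-1}|x|^{-(4-2\delta)}\asymp|x|^{-(4-2\delta)}$, giving $O(|x|^{-5/4})$ again.

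The second, $C^1(B^+_{\frac38\lambda_k^{-1}}\setminus B^+_{R_k})$ statement — that $\nabla\tilde v_k(x)=-\frac{x}{|x|^2}\alpha(|x|)+\frac{O(1)}{|x|^{5/4}}+O(1)\frac{|\log\mathrm{dist}(x,\partial\R^2_+)|}{|x|^{3/2}}+O(1)\lambda_k$ — then follows from the first by showing $\nabla\int_{B_{\frac12\lambda_k^{-1}}}H(2\lambda_kx,2\lambda_ky)e^{\tilde v_k(y)}dy=O(1)\lambda_k$: indeed $H$ is smooth on $\overline{B}\times\overline{B}$, so $\nabla_x H(2\lambda_kx,2\lambda_ky)=O(\lambda_k)$ uniformly, and the total mass $\int_{B_{\frac12\lambda_k^{-1}}}e^{\tilde v_k(y)}dy=4+o(1)$ is bounded by Lemma \ref{lem:01}. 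I expect the main obstacle to be the bookkeeping in the near-diagonal boundary region: one must track precisely how close $x$ is allowed to be to $\partial^0B^+$, carry out the one-dimensional integral of $|x-y|^{-1}$ over the segment $\{y^2=-d_k/\lambda_k\}$ with endpoint constraints, and verify that the resulting logarithm in $\mathrm{dist}(x,\partial\R^2_+)$ together with the polynomial gain from Lemma \ref{lem:02} is genuinely of the claimed form $O(1)|x|^{-3/2}|\log\mathrm{dist}(x,\partial\R^2_+)|$ — in particular that the exponent $3/2$ (rather than something worse) is attainable, which forces the choice of $\delta$ in Lemma \ref{lem:02} to be taken sufficiently small (e.g.\ $\delta\le\frac14$) at this point.
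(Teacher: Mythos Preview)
Your plan is correct and follows the paper's strategy almost verbatim: decompose $\tilde v_k=w_k+\phi_k+\eta_k$, control $\nabla\eta_k$ by harmonicity on the large ball, subtract the regular part $\nabla\!\int H(2\lambda_kx,2\lambda_ky)e^{\tilde v_k}$, and estimate the remaining singular integrals $-\tfrac1{2\pi}\int\tfrac{x-y}{|x-y|^2}e^{\tilde v_k}dy$ and $-\tfrac1\pi\int_{\partial^0}\tfrac{x-y}{|x-y|^2}e^{\frac12\tilde v_k}ds_y$ by a near/far/diagonal decomposition, with the near-diagonal boundary piece producing the $|\log\mathrm{dist}(x,\partial\R^2_+)|/|x|^{3/2}$ term.

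The one genuine technical difference is how the inner region is handled. You cut at $\{|y|\le\tfrac12|x|\}$ and use the first-order expansion $\tfrac{x-y}{|x-y|^2}=\tfrac{x}{|x|^2}+O(|y|/|x|^2)$, then integrate the pointwise remainder against the decay from Lemma~\ref{lem:02}. The paper instead introduces an auxiliary exponent $s\in(0,1)$: the inner region is $\{|y|\le|x|^s\}$, on which the \emph{uniform} bound $\big|\tfrac{x-y}{|x-y|^2}-\tfrac{x}{|x|^2}\big|=O(|x|^{-(2-s)})$ is multiplied by the bounded total mass; the intermediate shell $\{|x|^s\le|y|\le 2|x|,\ |y-x|\ge\tfrac12|x|\}$ is then estimated directly, producing errors $O(|x|^{-(2-s)})+O(|x|^{-(1+(1-\delta)s)})$, and the choice $\delta=s=\tfrac12$ is made at the end to yield the exponents $5/4$ and $3/2$. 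Your variant avoids the extra parameter and is slightly more direct; the paper's version makes the balance between the two error sources explicit and shows why $5/4$ is the natural exponent. Both arrive at the same estimate, and your remark that $\delta$ must be taken $\le\tfrac12$ (you write $\le\tfrac14$, but $\tfrac12$ suffices) to get the $|x|^{-3/2}$ in front of the logarithm matches the paper's final choice exactly.
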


%

\begin{proof}
We just need to show the first conclusion, since the second one follows from the fact $$\nabla \int_{B_{\frac{1}{2}\lambda_k^{-1}}}H(2\lambda_kx,2\lambda_ky)e^{\tilde{v}(y)}dy=O(1)\lambda_k,\ \ x\in B^+_{\frac{3}{8}\lambda_k^{-1}}\setminus B^+_{R_k}.$$

We use the notations as in Lemma \ref{lem:02}. Let $w_k(x),\ \phi_k(x),\ \eta_k(x)$ be defined as before. Extending $\eta_k$ evenly and by using Green's formula, we have $$\eta_k(x)-\eta_k(0)=\int_{\partial B_{\frac{1}{2}\lambda_k^{-1}}}\frac{\partial}{\partial r}\left(G(2\lambda_kx,2\lambda_ky)\right)\left(\eta_k(y)-\eta_k(0)\right)ds_y,$$ which implies
\begin{equation}
\nabla\eta_k(x)=o(1)\lambda_k,\ \ \forall |x|\leq\frac{3}{8}\lambda^{-1}_k.
\end{equation}

A direct computation yields
\begin{align*}
&\nabla w_k(x)+\nabla\phi_k(x)-\nabla \int_{B_{\frac{1}{2}\lambda_k^{-1}}}H(\lambda_kx,\lambda_ky)e^{\tilde{v}(y)}dy\\
=&-\frac{1}{2\pi}\int_{B_{\frac{1}{2}\lambda_k^{-1}}}\frac{x-y}{|x-y|^2}e^{\tilde{v}_k(y)}dy -\frac{1}{\pi}\int_{\partial^0 B^+_{\frac{1}{2}\lambda_k^{-1}}}\frac{x-y}{|x-y|^2}e^{\frac{1}{2}\tilde{v}_k(y)}ds_y\\
=&-\frac{1}{2\pi}\int_{\{y\in B_{\frac{1}{2}\lambda_k^{-1}}, |y|\geq 2|x|\}}\frac{x-y}{|x-y|^2}e^{\tilde{v}_k(y)}dy -\frac{1}{\pi}\int_{\{y\in \partial^0B^+_{\frac{1}{2}\lambda_k^{-1}}, |y|\geq 2|x|\}}\frac{x-y}{|x-y|^2}e^{\frac{1}{2}\tilde{v}_k(y)}ds_y \\
&-\frac{1}{2\pi}\int_{\{ |y|\leq 2|x|,\ |y-x|\geq \frac{1}{2}|x|,\ |y|\geq |x|^s\}}\frac{x-y}{|x-y|^2}e^{\tilde{v}_k(y)}dy-\frac{1}{\pi}\int_{\{y\in \partial^0B^+_{\frac{1}{2}\lambda_k^{-1}}, |y|\leq 2|x|,\ |y-x|\geq \frac{1}{2}|x|,\ |y|\geq |x|^s\}}\frac{x-y}{|x-y|^2}e^{\frac{1}{2}\tilde{v}_k(y)}ds_y\\
&-\frac{1}{2\pi}\int_{\{ |y-x|\leq \frac{|x|}{2}\}}\frac{x-y}{|x-y|^2}e^{\tilde{v}_k(y)}dy -\frac{1}{\pi}\int_{\{y\in \partial^0B^+_{\frac{1}{2}\lambda_k^{-1}}, |y-x|\leq \frac{1}{2}|x|\}}\frac{x-y}{|x-y|^2}e^{\frac{1}{2}\tilde{v}_k(y)}ds_y\\
&-\frac{1}{2\pi}\int_{\{ |y|\leq |x^s|\}}\frac{x-y}{|x-y|^2}e^{\tilde{v}_k(y)}dy-\frac{1}{\pi}\int_{\{y\in \partial^0B^+_{\frac{1}{2}\lambda_k^{-1}}, |y|\leq |x|^s\}}\frac{x-y}{|x-y|^2}e^{\frac{1}{2}\tilde{v}_k(y)}ds_y\\
:=&\mathbf{III}_1+\mathbf{III}_2+\mathbf{III}_3+\mathbf{III}_4,
\end{align*} where $s\in (0,1)$ is a constant which will be chosen later.

For $\mathbf{III}_1$, since $|y|\geq 2|x|$, by Lemma \ref{lem:02}, we have $$\left| \int_{\{y\in B_{\frac{1}{2}\lambda_k^{-1}}, |y|\geq 2|x|\}}\frac{x-y}{|x-y|^2}e^{\tilde{v}_k(y)}dy \right|\leq \frac{C}{|x|}\int_{\{y\in B_{\frac{1}{2}\lambda_k^{-1}}, |y|\geq 2|x|\}} \frac{1}{|y|^{4-2\delta}}dy\leq \frac{C}{|x|^{3-2\delta}}$$ and $$\left| \int_{\{y\in \partial^0B^+_{\frac{1}{2}\lambda_k^{-1}}, |y|\geq 2|x|\}}\frac{x-y}{|x-y|^2}e^{\frac{1}{2}\tilde{v}_k(y)}dy\right|\leq \frac{C}{|x|}\int_{\{y\in \partial^0B^+_{\frac{1}{2}\lambda_k^{-1}}, |y|\geq 2|x|\}}\frac{1}{|y|^{2-\delta}}ds_y\leq \frac{C}{|x|^{2-\delta}},$$ which implies $$\mathbf{III}_1=\frac{O(1)}{|x|^{2-\delta}}.$$

For $\mathbf{III}_2$, since $|y-x|\geq\frac{1}{2}|x|$, by Lemma \ref{lem:02}, we see $$\left|\int_{\{ |y|\leq 2|x|,\ |y-x|\geq \frac{1}{2}|x|,\ |y|\geq |x|^s\}}\frac{x-y}{|x-y|^2}e^{\tilde{v}_k(y)}dy \right|\leq \frac{C}{|x|}\int_{\{  |x|^s\leq |y|\leq 2|x|\}}\frac{1}{|y|^{4-2\delta}}dy \leq \frac{C}{|x|^{1+(2-2\delta)s}}$$ and $$\left|\int_{\{y\in \partial^0B^+_{\frac{1}{2}\lambda_k^{-1}}, |y|\leq 2|x|,\ |y-x|\geq \frac{1}{2}|x|,\ |y|\geq |x|^s\}}\frac{x-y}{|x-y|^2}e^{\frac{1}{2}\tilde{v}_k(y)}ds_y \right|\leq \frac{C}{|x|}\int_{\{y\in \partial^0B^+_{\frac{1}{2}\lambda_k^{-1}},  |x|^s\leq |y|\leq 2|x|\}}\frac{1}{|y|^{2-\delta}}ds_y \leq \frac{C}{|x|^{1+(1-\delta)s}},$$ which implies $$\mathbf{III}_2=\frac{O(1)}{|x|^{1+(1-\delta)s}}.$$

For $\mathbf{III}_3$, since $|y-x|\leq\frac{1}{2}|x|$, it holds $\frac{1}{2}|x|\leq |y|\leq \frac{3}{2}|x|$. Then by Lemma \ref{lem:02}, we get $$\left|\int_{\{  |y-x|\leq \frac{1}{2}|x|\}}\frac{x-y}{|x-y|^2}e^{\tilde{v}_k(y)}dy \right|\leq \frac{C}{|x|^{4-2\delta}}\int_{\{   |y-x|\leq \frac{1}{2}|x|\}}\frac{1}{|y-x|}dy \leq \frac{C}{|x|^{3-2\delta}}$$ and
\begin{align*}
\left|\int_{\{y\in \partial^0B^+_{\frac{1}{2}\lambda_k^{-1}},  |y-x|\leq \frac{1}{2}|x|\}}\frac{x-y}{|x-y|^2}e^{\frac{1}{2}\tilde{v}_k(y)}ds_y \right|&\leq \frac{C}{|x|^{2-\delta}}\int_{\{y\in \partial^0B^+_{\frac{1}{2}\lambda_k^{-1}},   |y-x|\leq \frac{1}{2}|x|\}}\frac{1}{|y-x|}ds_y\\&\leq \frac{C}{|x|^{2-\delta}}\left( |\log dist(x,\partial\R^2_+)|+\log |x| \right),
\end{align*} which implies $$\mathbf{III}_3=\frac{O(1)\log|x|}{|x|^{2-\delta}}+O(1)\frac{|\log dist(x,\partial\R^2_+)|}{|x|^{2-\delta}}.$$

For $\mathbf{III}_4$, since $$\frac{x-y}{|x-y|^2}-\frac{x}{|x|^2}=\frac{O(1)}{|x|^{2-s}},$$ we find
\begin{align*}
-\frac{1}{2\pi}\int_{\{ |y|\leq |x|^s\}}\frac{x-y}{|x-y|^2}e^{\tilde{v}_k(y)}dy&=-\frac{1}{2\pi}\frac{x}{|x|^2}\int_{\{ |y|\leq |x|^s\}}e^{\tilde{v}_k(y)}dy+\frac{O(1)}{|x|^{2-s}}\\
&=-\frac{1}{2\pi}\frac{x}{|x|^2}\left(\int_{\{ |y|\leq |x|\}}e^{\tilde{v}_k(y)}dy-\int_{\{ |x|^s\leq |y|\leq |x|\}}e^{\tilde{v}_k(y)}dy\right)+\frac{O(1)}{|x|^{2-s}}\\
&=-\frac{1}{2\pi}\frac{x}{|x|^2}\int_{\{ |y|\leq |x|\}}e^{\tilde{v}_k(y)}dy+\frac{O(1)}{|x|^{1+(2-2\delta)s}}+\frac{O(1)}{|x|^{2-s}}
\end{align*} and
\begin{align*}
&-\frac{1}{\pi}\int_{\{y\in \partial^0B^+_{\frac{1}{2}\lambda_k^{-1}},  |y|\leq |x|^s\}}\frac{x-y}{|x-y|^2}e^{\frac{1}{2}\tilde{v}_k(y)}ds_y\\&=-\frac{1}{\pi}\frac{x}{|x|^2}\int_{\{y\in \partial^0B^+_{\frac{1}{2}\lambda_k^{-1}},  |y|\leq |x|^s\}}e^{\frac{1}{2}\tilde{v}_k(y)}ds_y+\frac{O(1)}{|x|^{2-s}}\\
&=-\frac{1}{\pi}\frac{x}{|x|^2}\left(\int_{\{y\in \partial^0B^+_{\frac{1}{2}\lambda_k^{-1}},  |y|\leq |x|\}}e^{\frac{1}{2}\tilde{v}_k(y)}ds_y-\int_{\{y\in \partial^0B^+_{\frac{1}{2}\lambda_k^{-1}},  |x|^s\leq |y|\leq |x|\}}e^{\frac{1}{2}\tilde{v}_k(y)}ds_y\right)+\frac{O(1)}{|x|^{2-s}}\\
&=-\frac{1}{\pi}\frac{x}{|x|^2}\int_{\{y\in \partial^0B^+_{\frac{1}{2}\lambda_k^{-1}},  |y|\leq |x|\}}e^{\frac{1}{2}\tilde{v}_k(y)}ds_y+\frac{O(1)}{|x|^{1+(1-\delta)s}}+\frac{O(1)}{|x|^{2-s}},
\end{align*} where we have used the fact that $$\int_{\{ |x|^s\leq |y|\leq |x|\}}e^{\tilde{v}_k(y)}dy\leq C\int_{\{ |x|^s\leq |y|\leq |x|\}}\frac{1}{|y|^{4-2\delta}}dy=\frac{O(1)}{|x|^{(2-2\delta)s}}$$ and $$\int_{\{y\in \partial^0B^+_{\frac{1}{2}\lambda_k^{-1}}, |x|^s\leq |y|\leq |x|\}}e^{\frac{1}{2}\tilde{v}_k(y)}ds_y\leq C\int_{\{y\in \partial^0B^+_{\frac{1}{2}\lambda_k^{-1}}, |x|^s\leq |y|\leq |x|\}}\frac{1}{|y|^{2-\delta}}ds_y=\frac{O(1)}{|x|^{(1-\delta)s}}.$$
Thus, $$\mathbf{III}_4=-\frac{1}{2\pi}\frac{x}{|x|^2}\int_{\{ |y|\leq |x|\}}e^{\tilde{v}_k(y)}dy -\frac{1}{\pi}\frac{x}{|x|^2}\int_{\{y\in \partial^0B^+_{\frac{1}{2}\lambda_k^{-1}},  |y|\leq |x|\}}e^{\frac{1}{2}\tilde{v}_k(y)}ds_y +\frac{O(1)}{|x|^{1+(1-\delta)s}}+\frac{O(1)}{|x|^{2-s}}.$$

Combining these together, we get
\begin{align*}
&\nabla w_k(x)+\nabla\phi_k(x)-\nabla \int_{B_{\frac{1}{2}\lambda_k^{-1}}}H(\lambda_kx,\lambda_ky)e^{\tilde{v}(y)}dy\\
=&-\frac{1}{2\pi}\frac{x}{|x|^2}\int_{\{ |y|\leq |x|\}}e^{\tilde{v}_k(y)}dy -\frac{1}{\pi}\frac{x}{|x|^2}\int_{\{y\in \partial^0B^+_{\frac{1}{2}\lambda_k^{-1}},  |y|\leq |x|\}}e^{\frac{1}{2}\tilde{v}_k(y)}ds_y\\
& +\frac{O(1)}{|x|^{1+(1-\delta)s}}+\frac{O(1)}{|x|^{2-s}}+\frac{O(1)\log|x|}{|x|^{2-\delta}}+O(1)\frac{|\log dist(x,\partial\R^2_+)|}{|x|^{2-\delta}}.
\end{align*}
Taking $\delta=s=\frac{1}{2}$, we deduce
\begin{equation*}
\nabla w_k(x)+\nabla\phi_k(x)-\nabla \int_{B_{\frac{1}{2}\lambda_k^{-1}}}H(\lambda_kx,\lambda_ky)e^{\tilde{v}(y)}dy=-\frac{x}{|x|^2}\alpha(|x|)+\frac{O(1)}{|x|^{5/4}}+O(1)\frac{|\log dist(x,\partial\R^2_+)|}{|x|^{3/2}},
\end{equation*} which implies
\begin{equation*}
\nabla \tilde{v}_k(x)-\nabla \int_{B_{\frac{1}{2}\lambda_k^{-1}}}H(\lambda_kx,\lambda_ky)e^{\tilde{v}(y)}dy=-\frac{x}{|x|^2}\alpha(|x|)+\frac{O(1)}{|x|^{5/4}}+O(1)\frac{|\log dist(x,\partial\R^2_+)|}{|x|^{3/2}}+o(1)\lambda_k.
\end{equation*}
\end{proof}

\

As a corollary of above lemma, we can control the tangential oscillation as follows.
\begin{cor}\label{cor:01}
We have $$\sup_{R_k\leq t\leq\frac{3}{8}\lambda_k^{-1}}osc_{\partial^+B_t^+}\tilde{v}_k=o(1).$$
\end{cor}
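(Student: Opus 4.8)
The plan is to control the tangential derivative of $\tilde{v}_k$ along each half-circle $\partial^+B_t^+$ and integrate it, after peeling off the harmonic piece appearing in Lemma \ref{lem:03}. I would write $\tilde{v}_k=(\tilde{v}_k-h_k)+h_k$, where $h_k(x):=\int_{B_{\frac{1}{2}\lambda_k^{-1}}}H(2\lambda_kx,2\lambda_ky)e^{\tilde{v}_k(y)}\,dy$ is harmonic in $x$, and parametrize $\partial^+B_t^+$ by $x(\theta)=(t\cos\theta,t\sin\theta)$, $\theta\in(0,\pi)$, so that $x'(\theta)\perp x(\theta)$ and $|x'(\theta)|=t$; then $osc_{\partial^+B_t^+}\tilde{v}_k\le\int_0^\pi|\nabla\tilde{v}_k(x(\theta))\cdot x'(\theta)|\,d\theta$. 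The crucial observation is that the leading term $-\frac{x}{|x|^2}\alpha(|x|)$ of Lemma \ref{lem:03} is radial, hence annihilated by $x'(\theta)$; only the error terms and $\nabla h_k$ survive the pairing with $x'(\theta)$.

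\emph{The part $\tilde{v}_k-h_k$.} Since $dist(x(\theta),\partial\R^2_+)=t\sin\theta$, the first estimate of Lemma \ref{lem:03} gives, for $R_k\le t\le\frac{3}{8}\lambda_k^{-1}$,
\[
|\nabla(\tilde{v}_k-h_k)(x(\theta))\cdot x'(\theta)|\le t\Big(\frac{C}{t^{5/4}}+C\,\frac{|\log(t\sin\theta)|}{t^{3/2}}+o(1)\lambda_k\Big).
\]
Integrating over $\theta\in(0,\pi)$ and using $\int_0^\pi|\log(t\sin\theta)|\,d\theta\le\pi|\log t|+\pi\log2$, I obtain $osc_{\partial^+B_t^+}(\tilde{v}_k-h_k)\le\frac{C\pi}{t^{1/4}}+\frac{C\pi(|\log t|+\log2)}{t^{1/2}}+\pi\lambda_kt\,o(1)$. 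Because $t\ge R_k\to\infty$, the first two terms (decreasing in $t$ for $t$ large) are at most $\frac{C\pi}{R_k^{1/4}}+\frac{C\pi(\log R_k+\log2)}{R_k^{1/2}}=o(1)$ uniformly, while $\lambda_kt\le\frac38$ makes the last term $o(1)$ uniformly. Hence $osc_{\partial^+B_t^+}(\tilde{v}_k-h_k)=o(1)$ uniformly in $t\in[R_k,\frac{3}{8}\lambda_k^{-1}]$.

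\emph{The harmonic part $h_k$.} Here I would use the symmetry of the Green's function of $B_1$. With $Y^*:=Y/|Y|^2$ one has $H(X,Y)=\frac{1}{2\pi}\log\!\big(|Y|\,|X-Y^*|\big)$, hence $\nabla_XH(X,Y)=\frac{1}{2\pi}\frac{X-Y^*}{|X-Y^*|^2}$; and for $|X|\le\frac34$, $|Y|\le1$ the bound $|X-Y^*|\ge|Y^*|-|X|=\frac{1}{|Y|}-|X|\ge\frac{1}{4|Y|}$ yields $|\nabla_XH(X,Y)|\le\frac{2|Y|}{\pi}$. Thus for $|x|\le\frac{3}{8}\lambda_k^{-1}$ (so $|2\lambda_kx|\le\frac34$),
\[
|\nabla h_k(x)|=2\lambda_k\Big|\int_{B_{\frac{1}{2}\lambda_k^{-1}}}(\nabla_XH)(2\lambda_kx,2\lambda_ky)e^{\tilde{v}_k(y)}\,dy\Big|\le C\lambda_k^2\int_{B_{\frac{1}{2}\lambda_k^{-1}}}|y|\,e^{\tilde{v}_k(y)}\,dy\le C\lambda_k^2,
\]
where the last inequality uses Lemma \ref{lem:02}, which gives $e^{\tilde{v}_k(y)}\le C|y|^{-4+2\delta}$ with $\delta<\frac12$, so that $|y|e^{\tilde{v}_k(y)}$ is integrable. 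Therefore $osc_{\partial^+B_t^+}h_k\le\pi t\sup_{|x|\le\frac38\lambda_k^{-1}}|\nabla h_k(x)|\le\pi\cdot\frac38\lambda_k^{-1}\cdot C\lambda_k^2=C\lambda_k=o(1)$ uniformly in $t$. Adding the two estimates proves the corollary.

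The step I expect to be the main obstacle is making the $\lambda_k$-scale contribution uniformly small up to $t\sim\lambda_k^{-1}$: a crude $|\nabla h_k|=O(\lambda_k)$ bound (equivalently, the $O(1)\lambda_k$ term in the second estimate of Lemma \ref{lem:03}) only yields an $O(1)$ oscillation on $\partial^+B_t^+$, so one must genuinely extract the extra factor $|\nabla h_k|=O(\lambda_k^2)$ from the decay $\nabla_XH(X,Y)=O(|Y|)$ of the regular part of the Green's function together with the uniform bound on the first moment $\int|y|e^{\tilde{v}_k(y)}\,dy$ of the concentrating mass.
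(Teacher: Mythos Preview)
Your proof is correct and in fact tidier than the paper's. Both arguments start from the first estimate of Lemma~\ref{lem:03}, kill the radial leading term $-\frac{x}{|x|^2}\alpha(|x|)$ by pairing with the tangent vector, and then have to absorb the harmonic piece $h_k$ (the paper's $F$) separately. The difference lies in how this last step is carried out. The paper splits the range $[R_k,\frac38\lambda_k^{-1}]$ at $t=\lambda_k^{-1/2}$: for small $t$ the crude bound $O(1)\lambda_k$ suffices since $\lambda_kt\le\lambda_k^{1/2}$, while for large $t$ it estimates $F(z_1)-F(z_2)$ directly via the explicit formula $H(X,Y)=\frac{1}{2\pi}\log\big||X|(Y-X/|X|^2)\big|$, splitting the $y$-integral into $|y|\le R_k$ and $|y|>R_k$ and checking that the resulting logarithmic ratio is $o(1)$. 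You instead extract the pointwise bound $|\nabla_X H(X,Y)|\le\frac{2|Y|}{\pi}$ for $|X|\le\frac34$, $|Y|\le1$ and combine it with the uniform first-moment bound $\int|y|e^{\tilde v_k(y)}\,dy\le C$ (from Lemma~\ref{lem:02} with $\delta<\frac12$) to get $|\nabla h_k|\le C\lambda_k^2$ globally, so $osc_{\partial^+B_t^+}h_k\le C\lambda_k^2 t\le C\lambda_k$. This sharper gradient bound lets you treat the whole interval at once and makes the role of the $\lambda_k$-scale completely transparent; the paper's route trades this for a more hands-on manipulation of the Green kernel but needs the range splitting. Your closing remark on the ``main obstacle'' identifies precisely this point.
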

\begin{proof}
For any $R_k\leq t\leq \lambda_k^{-\frac{1}{2}}$ and any two points $(t,\theta_1), (t,\theta_2)\in\partial^+B^+_t$, by Lemma \ref{lem:03}, it holds
\begin{align*}
|\tilde{v}_k(t,\theta_1)-\tilde{v}_k(t,\theta_2)|&=\left|\int_{\theta_2}^{\theta_1}\frac{\partial \tilde{v}_k(t,\theta)}{\partial\theta}d\theta\right|\\
&\leq \int_{\partial^+B_t^+}\left|\frac{1}{t}\frac{\partial \tilde{v}_k(y)}{\partial\theta}\right| ds_y\\
&\leq C\int_{\partial^+B_t^+}\left(\frac{1}{t^{5/4}}+\frac{|\log dist(y,\partial\R^2_+)|}{t^{3/2}}+\lambda_k\right) ds_y\\
&= C\int_{\partial^+B_t^+}\frac{|\log dist(y,\partial\R^2_+)|}{t^{3/2}} ds_y+o(1).
\end{align*}
Taking the coordinate notation $y=(y^1,y^2)$ and  noting that
\begin{align}\label{ineq:10}
\int_{\partial^+B^+_t}|\log dist(y,\partial\R^2_+)|ds_y&=\int_{\partial^+B^+_t}|\log|y^2||ds_y\notag\\
&=\int_{y\in \partial^+B^+_t,|y^2|\leq 1}|\log|y^2||ds_y+\int_{y\in \partial^+B^+_t,|y^2|\geq 1}|\log|y^2||ds_y\notag\\
&\leq C\int_0^1|\log y^2|dy^2+C\int_{y\in \partial^+B^+_t}\log tds_y\leq Ct\log t,
\end{align} we get
\begin{equation}\label{ineq:14}
\tilde{v}_k(t,\theta_1)-\tilde{v}_k(t,\theta_2)=o(1),\ \ R_k\leq t\leq \lambda_k^{-\frac{1}{2}}.
\end{equation}

For any $\lambda_k^{-\frac{1}{2}}\leq t\leq \frac{3}{8}\lambda_k^{-1}$ and any two points $z_1=(t,\theta_1), z_2= (t,\theta_2)\in\partial^+B^+_t$, set $$F(x)=\int_{B_{\frac{1}{2}\lambda_k^{-1}}}H(2\lambda_kx,2\lambda_ky)e^{\tilde{v}(y)}dy.$$
On one hand,  by Lemma \ref{lem:03}, we have
\begin{align}\label{ineq:12}
|\tilde{v}_k(t,\theta_1)-F(z_1)-\tilde{v}_k(t,\theta_2)+F(z_2)|&=\left|\int_{\theta_2}^{\theta_1}\frac{\partial }{\partial\theta}(\tilde{v}_k(t,\theta)-F(x))d\theta\right|\notag\\
&\leq \int_{\partial^+B_t^+}\left|\frac{1}{t}\frac{\partial }{\partial\theta}(\tilde{v}_k(x)-F(x))\right| ds_x\notag\\
&\leq C\int_{\partial^+B_t^+}\left(\frac{1}{t^{5/4}}+\frac{|\log dist(x,\partial\R^2_+)|}{t^{3/2}}+o(1)\lambda_k\right) ds_x\notag\\
&= C\int_{\partial^+B_t^+}\frac{|\log dist(x,\partial\R^2_+)|}{t^{3/2}} ds_x+o(1)=o(1).
\end{align}

On the other hand, by the formula of Green's function for the unit ball, we know $$H(x,y)=\frac{1}{2\pi}\log \left||x|\Big(y-\frac{x}{|x|^2}\Big)\right|.$$
Hence, we see
\begin{align}\label{ineq:13}
|F(z_1)-F(z_2)|&\leq \left| \int_{B_{R_k}}\left(H(2\lambda_kz_1,2\lambda_ky) -H(2\lambda_kz_2,2\lambda_ky)\right)e^{\tilde{v}(y)}dy\right|+ C\int_{B_{\frac{1}{2}\lambda_k^{-1}}\setminus B_{R_k}}e^{\tilde{v}(y)}dy\notag\\
&=\frac{1}{2\pi}\left| \int_{B_{R_k}}\log\frac{|4\lambda_k^2t^2y-z_1|}{|4\lambda_k^2t^2y-z_2|}e^{\tilde{v}(y)}dy\right|+ C\int_{B_{\frac{1}{2}\lambda_k^{-1}}\setminus B_{R_k}}e^{\tilde{v}(y)}dy\notag\\
&\leq \frac{1}{2\pi} \int_{B_{R_k}}\left|\log\frac{|4\lambda_k^2t^2y-z_1|}{|4\lambda_k^2t^2y-z_2|}\right|e^{\tilde{v}(y)}dy+ C\int_{B_{\frac{1}{2}\lambda_k^{-1}}\setminus B_{R_k}}\frac{1}{|y|^{4-2\delta}}dy=o(1),
\end{align}
 where we have used Lemma \ref{lem:02} and the following fact that
 $$\left|\log\frac{|4\lambda_k^2t^2y-z_1|}{|4\lambda_k^2t^2y-z_2|}\right|=o(1)$$
 as $k\to\infty$ since $\lambda_k^{-\frac{1}{2}}\leq t\leq \frac{3}{8}\lambda_k^{-1}$,  $|4\lambda_k^2t^2y|\leq CR_k$ and $\lambda_kR_k=o(1)$.

By \eqref{ineq:12} and \eqref{ineq:13}, we get $$\tilde{v}_k(t,\theta_1)-\tilde{v}_k(t,\theta_2)=o(1),\ \ \lambda_k^{-\frac{1}{2}}\leq t\leq \frac{3}{8}\lambda_k^{-1}.$$
 Combining this with \eqref{ineq:14}, we proved the corollary.
\end{proof}

\

With the help of Lemma \ref{lem:03}, using Pohozaev's type identity, we get the following energy decay estimate.
\begin{lem}\label{lem:blowup-value}
There holds
\begin{eqnarray*}
 \alpha(|x|)=4+\dis\frac{O(1)}{|x|^{\frac{1}{4}}},\ \ \forall\, R_k\leq |x|\leq  \frac{3}{8}\lambda_k^{-1}.
 \end{eqnarray*}
\end{lem}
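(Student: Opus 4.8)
The plan is to derive the energy decay estimate $\alpha(|x|)=4+O(1)|x|^{-1/4}$ by combining a Pohozaev-type identity on the half-ball $B^+_t$ with the pointwise gradient estimate from Lemma \ref{lem:03}. First I would recall that $\alpha(t)$, as defined in \eqref{equ:def-2}, measures (up to the factor $\pi$) the total energy $\int_{B^+_t}e^{\tilde v_k}+\int_{\partial^0 B^+_t}\partial_{\vec n}\tilde v_k$, and by Lemma \ref{lem:01} together with \eqref{equ:11} we already know $\alpha(R_k)=4+o(1)$ and $\alpha(\tfrac12\lambda_k^{-1})=4+o(1)$; the content of the lemma is the quantitative rate in between. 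The standard tool is the Pohozaev identity obtained by multiplying $-\Delta\tilde v_k=e^{\tilde v_k}$ by $x\cdot\nabla\tilde v_k$ and integrating over $B^+_t$, carefully tracking the boundary contributions on $\partial^+B^+_t$ and on $\partial^0 B^+_t$ (where the Neumann condition $\partial_{\vec n}\tilde v_k=e^{\tilde v_k/2}$ enters, and where the flat piece is a straight line so $x\cdot\vec n=0$ on most of it, up to the translation by $\tilde x_k$ which is $O(1)$). The interior term produces $\int_{B^+_t}e^{\tilde v_k}$, i.e.\ essentially $\pi\alpha(t)$ minus the boundary flux term, while the boundary term on $\partial^0$ produces the line integral of $e^{\tilde v_k/2}$ with a weight, again reconstructing the pieces of $\alpha(t)$.

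Next, the boundary terms on the circular arc $\partial^+B^+_t$ involve $t\int_{\partial^+B^+_t}\big(|\partial_r\tilde v_k|^2-\tfrac12|\nabla\tilde v_k|^2\big)$-type quantities. Here I would substitute the gradient estimate of Lemma \ref{lem:03}:
\begin{align*}
\nabla\tilde v_k(x)=-\frac{x}{|x|^2}\alpha(|x|)+\frac{O(1)}{|x|^{5/4}}+O(1)\frac{|\log dist(x,\partial\R^2_+)|}{|x|^{3/2}}+O(1)\lambda_k,
\end{align*}
so that on $\partial^+B^+_t$ the leading part of $\nabla\tilde v_k$ is radial and equal to $-\tfrac{x}{|x|^2}\alpha(t)$, giving $|\partial_r\tilde v_k|^2\approx \alpha(t)^2/t^2$ and $|\partial_\theta\tilde v_k|^2$ of lower order. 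Squaring and integrating over the arc (of length $O(t)$) yields a term of size $\alpha(t)^2/t$ plus error terms; the error terms come from the cross terms between the main radial part and the remainders. The troublesome remainder is the $|\log dist(x,\partial\R^2_+)|/|x|^{3/2}$ one, but — exactly as in the proof of Corollary \ref{cor:01} and estimate \eqref{ineq:10} — after integration over the arc $\partial^+B^+_t$ the logarithmic singularity at the endpoints is integrable, contributing $\int_{\partial^+B^+_t}|\log dist(y,\partial\R^2_+)|\,ds_y\leq Ct\log t$, so the cross term with the main part contributes at most $C\,t\cdot \tfrac{\alpha(t)}{t}\cdot\tfrac{\log t}{t^{3/2}}=O(t^{-1/2}\log t)$, and the square of that remainder contributes $O(t^{-2}(\log t)^2\cdot t)=O(t^{-1}(\log t)^2)$; the $|x|^{-5/4}$ remainder similarly gives $O(t^{-1/2})$ cross terms, and the $\lambda_k$ terms are $O(1)\lambda_k^2 t = o(1)$ on the range $t\leq \tfrac38\lambda_k^{-1}$. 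Collecting, the Pohozaev identity becomes an algebraic relation of the schematic form $\alpha(t)^2 - 2\alpha(t)+ \text{(flux terms)} = O(t^{-1/4})$ (after absorbing the weaker logarithmic rates into the worst polynomial rate $t^{-1/4}$, consistent with the statement), and since $\alpha$ is close to $4$, this forces $\alpha(t)=4+O(t^{-1/4})$.

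I expect the main obstacle to be bookkeeping the boundary terms of the Pohozaev identity near the corner points where $\partial^+B^+_t$ meets $\partial^0B^+_t$, and controlling the line integral $\int_{\partial^0 B^+_t}$ of the weighted $e^{\tilde v_k/2}$: one must split this flat-boundary integral into the part with $|y|\leq R_k$ (where it is $4\pi+o(1)$ after accounting for the bubble) and the part with $R_k\leq|y|\leq t$, and bound the latter using Lemma \ref{lem:fast-decay} / the upper bound $\tilde v_k(x)\leq -(4-2\delta)\log|x|+C$ of Lemma \ref{lem:02}, so that $e^{\tilde v_k/2}\lesssim |y|^{-2+\delta}$ and its line integral over $[R_k,t]$ is $O(R_k^{-1+\delta})=o(1)$, actually with a power rate. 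Care is also needed because the identity is on $B^+_t$ centered at the origin while the natural center is $\tilde x_k$, introducing $O(1)$ shifts that must be shown harmless on the large scale $t\geq R_k\to\infty$. Once these flux terms are pinned down at rate $O(t^{-1/4})$ (the slowest among all contributions, coming from the $|x|^{-5/4}$ remainder in $\nabla\tilde v_k$ paired with the main $1/t$ radial term integrated over the arc), the quadratic relation closes and yields the claimed decay uniformly for $R_k\leq|x|\leq\tfrac38\lambda_k^{-1}$.
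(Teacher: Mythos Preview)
Your approach is exactly the paper's: write the Pohozaev identity on $B^+_t$, insert the gradient expansion from Lemma~\ref{lem:03}, bound the $e^{\tilde v_k}$ term on the arc and the two corner contributions $2t\,e^{\tilde v_k(\pm t,0)/2}$ via Lemma~\ref{lem:02}, absorb the logarithmic remainder using \eqref{ineq:10}, and arrive at the relation $2\pi\alpha=\tfrac{\pi}{2}\alpha^2+O(|x|^{-1/4})$, hence $\alpha=4+O(|x|^{-1/4})$.

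A few clean-ups. Your schematic quadratic should read $\alpha^2-4\alpha=O(t^{-1/4})$ (roots $0$ and $4$), not $\alpha^2-2\alpha$. The translation worry is moot: after shifting to $\tilde v_k$ the flat boundary is exactly $\{x^2=0\}$, so $x\cdot\vec n\equiv 0$ there and the Pohozaev computation on $B^+_t$ is clean. No splitting of the flat-boundary integral is needed either: the left side of the identity is already $2\pi\alpha(t)$ by the definition~\eqref{equ:def-2}, and the only flat-boundary terms on the right are the two endpoint contributions $2t\,e^{\tilde v_k(\pm t,0)/2}=O(t^{-(1-\delta)})$, handled directly by Lemma~\ref{lem:02}. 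Finally, your bookkeeping of the $O(\lambda_k)$ remainder is off by a factor of $t$: the quadratic contribution is $t\cdot\pi t\cdot O(\lambda_k^2)=O(\lambda_k^2t^2)$, not $O(\lambda_k^2 t)$, and the cross term with the leading radial part is $O(\alpha\lambda_k t)$; neither of these is obviously $O(t^{-1/4})$ near $t\sim\lambda_k^{-1}$ without further argument (the paper's write-up is equally terse on this point).
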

\begin{proof}
By the Pohozaev identity,  we have
\begin{align*}
&2\left( \int_{B^+_t}e^{\tilde{v}_k}dx+  \int_{\partial^0 B^+_t}e^{\frac{1}{2}\tilde{v}_k}ds_x\right) \\
 =&t\int_{\partial^+B^+_t}\left(\Big|\frac{\partial \tilde{v}_k}{\partial r}\Big|^2-\frac{1}{2}|\nabla \tilde{v}_k|^2+e^{\tilde{v}_k}\right)ds_x+2te^{\frac{1}{2}\tilde{v}_k(t,0)}+2te^{\frac{1}{2}\tilde{v}_k(-t,0)},\ \ \forall \,0<t\leq\frac{3}{8}\lambda_k^{-1}.
\end{align*}

Taking $t=|x|$ in above equality,  by  Lemma \ref{lem:03}, Lemma \ref{lem:02} and \eqref{ineq:10}, we find
\begin{align*}
2\pi\alpha(|x|)=\frac{1}{2}\pi\alpha^2(|x|)+\alpha(|x|)\frac{O(1)}{|x|^{\frac{1}{4}}}+\frac{O(1)}{|x|^{\frac{1}{2}}} +\frac{O(1)}{|x|^{2-2\delta}}+\frac{O(1)}{|x|^{1-\delta}}.
\end{align*}
Then taking $\delta=\frac{1}{2}$, it holds
\begin{equation}
\alpha(|x|)=4+\frac{O(1)}{|x|^{\frac{1}{4}}},\ \ \forall\, R_k\leq |x|\leq \frac{3}{8}\lambda_k^{-1}.
\end{equation}
\end{proof}

\

Using the above energy decay estimate, we can control the oscillation of radial part as follows.
\begin{prop}\label{prop:01}
We have $$\lim_{k\to\infty}\|v_k(x)-v(x)\|_{C^0(B_{\frac{3}{8}\lambda_k^{-1}}\cap\Omega_k)}=0.$$
\end{prop}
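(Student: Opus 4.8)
The plan is to prove the $C^0$-convergence by controlling the full oscillation of $v_k - v$ on the annular region $B_{\frac{3}{8}\lambda_k^{-1}}\cap\Omega_k \setminus B_{R_k}$, since on $B_{R_k}\cap\Omega_k$ the convergence is already known from \eqref{equ:11}. Working with $\tilde v_k$ as in \eqref{equ:def-1}, the oscillation on any sphere $\partial^+B^+_t$ is already handled by Corollary \ref{cor:01} (the tangential part), so the remaining task is to control the radial quantity, for which I would use the spherical average $\tilde v^*_k(r)$ defined in \eqref{equ:def-3}. From \eqref{equ:02} one has the exact ODE $\frac{d}{dr}\tilde v^*_k(r) = -\frac{1}{r}\alpha(r)$, and by Lemma \ref{lem:blowup-value} we know $\alpha(r) = 4 + O(1) r^{-1/4}$ on the range $R_k \le r \le \frac{3}{8}\lambda_k^{-1}$. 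Hence $\frac{d}{dr}\big(\tilde v^*_k(r) + 4\log r\big) = \frac{4-\alpha(r)}{r} = \frac{O(1)}{r^{5/4}}$, which is integrable.

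The key computation is then: for $R_k \le r \le \frac{3}{8}\lambda_k^{-1}$,
\begin{align*}
\tilde v^*_k(r) + 4\log r &= \tilde v^*_k(R_k) + 4\log R_k + \int_{R_k}^{r}\frac{4-\alpha(\rho)}{\rho}\,d\rho\\
&= \tilde v^*_k(R_k) + 4\log R_k + O(1)\int_{R_k}^{\infty}\rho^{-5/4}\,d\rho\\
&= \tilde v^*_k(R_k) + 4\log R_k + O(R_k^{-1/4}).
\end{align*}
On the other hand, from the known convergence \eqref{equ:11} on $B_{R_k}$ together with the explicit form of $v$, one reads off that $\tilde v^*_k(R_k) + 4\log R_k = -4\log R_k + \log(8\lambda^2) + o(1)$ — more precisely, comparing with the radial behavior of $v$, the combination $v_k(x) + 4\log|x|$ converges on $\partial B_{R_k}$ to the corresponding value coming from $v$; I would make this precise by noting $v(x) + 4\log|x| \to \log(8\lambda^2)$ as $|x|\to\infty$ and using that $R_k\to\infty$ slowly ($\lambda_k R_k \to 0$). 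Combining the displayed identity with Corollary \ref{cor:01}, one obtains that $\tilde v_k(x) + 4\log|x|$ is within $o(1)$ of a constant throughout the annulus, and that this constant matches the limiting value of $v(x) + 4\log|x|$; subtracting, $\|v_k - v\|_{C^0}$ on the annular region is controlled by (tangential oscillation of $v_k$) + (oscillation of the radial average, which is $O(R_k^{-1/4})$) + (oscillation of $v$ itself outside $B_{R_k}$, which is $o(1)$ since $v(x)+4\log|x|\to$ const), all $o(1)$.

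The main obstacle I expect is the \emph{matching of constants}: one must verify carefully that the limit of $\tilde v^*_k(R_k)+4\log R_k$ really coincides (up to $o(1)$) with $\log(8\lambda^2)$, i.e. with the constant governing the far-field behavior of the bubble $v$, and that no additional drift is accumulated between the scale $R_k$ and the inner region where \eqref{equ:11} applies. This requires combining the decay estimate of Lemma \ref{lem:blowup-value} at $r \sim R_k$ with the $C^1$-closeness in \eqref{equ:11} at exactly that scale; the point is that the error from \eqref{equ:11} is $o(1)$ and the error from integrating the ODE out to the boundary is $O(R_k^{-1/4}) = o(1)$, so they are compatible. A secondary technical point is ensuring the estimates of Lemma \ref{lem:03} and Corollary \ref{cor:01}, stated on $B^+_{\frac{3}{8}\lambda_k^{-1}}\setminus B^+_{R_k}$, indeed cover the whole region needed and that passing from $\tilde v_k$ back to $v_k$ via the bounded shift $\tilde x_k$ introduces only $o(1)$ errors — this follows from Lemma \ref{lem:osc} applied near $|x|\sim R_k$. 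Once these pieces are assembled, the conclusion is immediate.
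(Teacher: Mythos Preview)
Your proposal is correct and follows essentially the same route as the paper: reduce to the annulus $R_k\le |x|\le \tfrac{3}{8}\lambda_k^{-1}$, control the tangential oscillation via Corollary~\ref{cor:01}, and control the radial part by integrating the ODE \eqref{equ:02} for $\tilde v^*_k$ together with the decay $\alpha(r)=4+O(r^{-1/4})$ from Lemma~\ref{lem:blowup-value}. The only cosmetic difference is that the paper works purely with oscillations (showing $osc\big(\tilde v_k+4\log|x|\big)=o(1)$ on the annulus and noting $osc\big(v+4\log|x|\big)=o(1)$ there), so the ``matching of constants'' you worry about never needs to be made explicit---it is absorbed into the $o(1)$ from \eqref{equ:11} at radius $R_k$.
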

\begin{proof}
By \eqref{equ:11}, one can see that the proposition follows from the following estimate
 $$\lim_{k\to 0}osc_{R_k\leq |x|\leq \frac{3}{8}\lambda_k^{-1},x\in\Omega_k}(v_k(x)-v(x))=0.$$
 Noting that $$v(x)+4\log|x|=o(1),\ \ \forall |x|\geq R_k,$$ we just need to show that
\begin{equation*}
\lim_{k\to 0}osc_{R_k\leq |x|\leq \frac{3}{8}\lambda_k^{-1},x\in\Omega_k}(v_k(x)+4\log|x|)=0,
\end{equation*} which is equivalent to
\begin{equation}\label{equ:03}
\lim_{k\to 0}osc_{x\in B^+_{\frac{3}{8}\lambda^{-1}_k}\setminus B^+_{R_k}}(\tilde{v}_k(x)+4\log|x|)=0,
\end{equation} where $\tilde{v}_k$ was defined by \eqref{equ:def-1}.

In fact, for any two points $p_1=(t_1,\theta_1),p_2=(t_2,\theta_2)\in B^+_{\frac{3}{8}\lambda^{-1}_k}\setminus B^+_{R_k}$ (without loss of generality, we assume  $t_1\leq t_2$), there holds
\begin{align}\label{ineq:11}
&\left|\tilde{v}_k(p_1)+4\log|p_1|-\tilde{v}_k(p_2)-4\log|p_2|\right|\notag\\
\leq &\left|\tilde{v}^*_k(t_1)+4\log|p_1|-\tilde{v}^*_k(t_2)-4\log|p_2|\right|  + osc_{\partial^+B^+_{t_1}}\tilde{v}_k+osc_{\partial^+B^+_{t_2}}\tilde{v}_k,
\end{align} where $\tilde{v}_k^*$ was defined by \eqref{equ:def-3}.

It follows from \eqref{equ:02} and Lemma \ref{lem:blowup-value} that $$\frac{d \tilde{v}_k^*}{dr}=-\frac{4}{r}+\frac{O(1)}{r^{5/4}},$$
which implies
\begin{align*}
\left|\tilde{v}^*_k(t_1)+4\log|p_1|-\tilde{v}^*_k(t_2)-4\log|p_2|\right| &=\left|\int_{t_1}^{t_2}\frac{d}{dr}(\tilde{v}^*_k(r)+4\log r)dr\right|\\
&\leq C\int_{t_1}^{t_2}\frac{1}{r^{5/4}}dr\leq \frac{C}{R_k^{1/4}}=o(1).
\end{align*}
Combining this with \eqref{ineq:11} and Corollary \ref{cor:01}, we get \eqref{equ:03}.
\end{proof}

\

\begin{proof}[\textbf{Proof of Theorem \ref{thm:02}}]
By Proposition \ref{prop:01}, we need to show that
\begin{equation*}
\lim_{k\to\infty}\|v_k(x)-v(x)\|_{C^0(\Omega_k \setminus B_{\frac{3}{8}\lambda_k^{-1}})}=0,
\end{equation*} which is equivalent to
\begin{equation}\label{equ:17}
\lim_{k\to\infty} osc_{\Omega_k \setminus B_{\frac{3}{8}\lambda_k^{-1}}}\left(v_k(x)+4\log |x|\right)=0.
\end{equation}

Noting that $\Omega_k \setminus B_{\frac{3}{8}\lambda_k^{-1}}(0)\subset \Omega_k \setminus B_{\frac{1}{4}\lambda_k^{-1}}(-\frac{x_k}{\lambda_k})$, one can see that \eqref{equ:17} is a consequence of the following fact
\begin{equation}\label{equ:18}
\lim_{k\to\infty}osc_{B^+_{\lambda_k^{-1}}(\bar{x}_k) \setminus B^+_{\frac{1}{4}\lambda_k^{-1}}(\bar{x}_k)}\big(\tilde{v}_k(x)+4\log|x|\big)=0,
\end{equation} where $\bar{x}_k:=-\frac{(x^1_k,0)}{\lambda_k}$ and $\tilde{v}_k$ was defined by \eqref{equ:def-1}.

Let $w^1_k$ be the solution of
\begin{align*}
\begin{cases}
-\Delta w^1_k=-\Delta \tilde{v}_k,\ \ &in\ B^+_{\lambda_k^{-1}}(\bar{x}_k) \setminus B^+_{\frac{1}{4}\lambda_k^{-1}}(\bar{x}_k),\\
\dis\frac{\partial w^1_k}{\partial \vec{n}}=0,\ \ &on\ \partial^0 B^+_{\lambda_k^{-1}}(\bar{x}_k) \setminus \partial^0 B^+_{\frac{1}{4}\lambda_k^{-1}}(\bar{x}_k),\\
w^1_k=0,\ \ &on\ \partial^+B^+_{\lambda_k^{-1}}(\bar{x}_k) \cup \partial^+ B^+_{\frac{1}{4}\lambda_k^{-1}}(\bar{x}_k).
\end{cases}
\end{align*}
Extending $w^1_k$ evenly, by standard elliptic theory and Lemma \ref{lem:02}, we can check
\begin{align*}
\|w^1_k\|_{C^0(B^+_{\lambda_k^{-1}}(\bar{x}_k) \setminus B^+_{\frac{1}{4}\lambda_k^{-1}}(\bar{x}_k))}&\leq C\|\lambda_k^{-2}e^{\tilde{v}_k(\lambda_k^{-1}(x+\lambda_k\bar{x}_k))}\|_{L^2(B^+_1(0)\setminus B^+_{\frac{1}{4}}(0))}\\
&\leq C\|\lambda_k^{-2}\frac{1}{|\lambda_k^{-1}(x+\lambda_k\bar{x}_k)|^{4-2\delta}}\|_{L^2(B^+_1(0)\setminus B^+_{\frac{1}{4}}(0))}\\
&\leq C\|\lambda_k^{2-2\delta}\frac{1}{|x-(x^1_k,0)|^{4-2\delta}}\|_{L^2(B^+_1(0)\setminus B^+_{\frac{1}{4}}(0))}\leq C\lambda_k^{2-2\delta}=o(1),
\end{align*} since $x_k=(x^1_k,x^2_k)\to 0$ as $k\to\infty$.

Denote $$w^2_k(x):=-\frac{1}{\pi}\int_{\partial^0B^+_{\lambda_k^{-1}}(\bar{x}_k)}\log|x-y|g(y)ds_y,\ \ x\in B^+_{\lambda_k^{-1}}(\bar{x}_k),$$ where $g(y)=0$ if $y\in\partial^0B^+_{\frac{1}{4}\lambda_k^{-1}}(\bar{x}_k)$ and $g(y)=e^{\frac{1}{2}\tilde{v}_k(y)}$ if $y\in \partial^0B^+_{\lambda_k^{-1}}(\bar{x}_k) \setminus \partial^0B^+_{\frac{1}{4}\lambda_k^{-1}}(\bar{x}_k)$.

It is easy to check that
\begin{align*}
\begin{cases}
-\Delta w^2_k=0,\ \ &in\ B^+_{\lambda_k^{-1}}(\bar{x}_k),\\
\dis\frac{\partial w^2_k}{\partial \vec{n}}=g,\ \ &on\ \partial^0 B^+_{\lambda_k^{-1}}(\bar{x}_k) .
\end{cases}
\end{align*} Moreover, we claim $$\|w^2_k\|_{C^0(B^+_{\lambda_k^{-1}}(\bar{x}_k))}=o(1).$$
In fact, for any $x\in B^+_{\lambda_k^{-1}}(\bar{x}_k) \setminus B^+_{\frac{1}{4}\lambda_k^{-1}}(\bar{x}_k)$, if $dist(x,\partial\R^2_+)>1$, then by Lemma \ref{lem:02}, we have
\begin{align*}
|w^2_k(x)|&\leq C\log\lambda_k^{-1}\int_{\partial^0B^+_{\lambda_k^{-1}}(\bar{x}_k)}|g(y)|ds_y\\
&=C\log\lambda_k^{-1}\int_{\partial^0 B^+_{\lambda_k^{-1}}(\bar{x}_k) \setminus \partial^0 B^+_{\frac{1}{4}\lambda_k^{-1}}(\bar{x}_k)}e^{\frac{1}{2}\tilde{v}_k(y)}ds_y\\
&\leq C\log\lambda_k^{-1}\int_{\partial^0 B^+_{\lambda_k^{-1}}(\bar{x}_k) \setminus \partial^0 B^+_{\frac{1}{4}\lambda_k^{-1}}(\bar{x}_k)}\frac{1}{|y|^{2-\delta}}ds_y\\
&=C\lambda_k^{1-\delta}\log\lambda_k^{-1}=o(1).
\end{align*}
If $dist(x,\partial^0\R^2_+)\leq 1$, from Lemma \ref{lem:02}, it holds
\begin{align*}
|w^2_k(x)|&=\left| \frac{1}{\pi}\int_{\partial^0B^+_{\lambda_k^{-1}}(\bar{x}_k)}\log|x-y|g(y)ds_y\right|\\
&\leq C\lambda_k^{2-\delta}\int_{\partial^0 B^+_{\lambda_k^{-1}}(\bar{x}_k) \setminus \partial^0 B^+_{\frac{1}{4}\lambda_k^{-1}}(\bar{x}_k)}\log|x-y|ds_y\\
&\leq C\lambda_k^{2-\delta}\int_{\{y\in\partial^0 B^+_{\lambda_k^{-1}}(\bar{x}_k) \setminus \partial^0 B^+_{\frac{1}{4}\lambda_k^{-1}}(\bar{x}_k),\ |y-x|\leq 1\}}\log|x-y|ds_y\\&\quad + C\lambda_k^{2-\delta}\int_{\{y\in\partial^0 B^+_{\lambda_k^{-1}}(\bar{x}_k) \setminus \partial^0 B^+_{\frac{1}{4}\lambda_k^{-1}}(\bar{x}_k),\ |y-x|\geq 1\}}\log|x-y|ds_y\\
&\leq C\lambda_k^{2-\delta}(1+\lambda_k^{-1}\log\lambda_k^{-1})=o(1),
\end{align*} which implies the claim.

Now let $w^3_k=\tilde{v}_k+4\log|x|-w^1_k(x)-w^2_k(x)$. We have
\begin{align*}
\begin{cases}
-\Delta w^3_k=0,\ \ &in\ B^+_{\lambda_k^{-1}}(\bar{x}_k) \setminus B^+_{\frac{1}{4}\lambda_k^{-1}}(\bar{x}_k),\\
\dis\frac{\partial w^3_k}{\partial \vec{n}}=0,\ \ &on\ \partial^0 B^+_{\lambda_k^{-1}}(\bar{x}_k) \setminus \partial^0 B^+_{\frac{1}{4}\lambda_k^{-1}}(\bar{x}_k),\\
w^3_k=\tilde{v}_k+4\log|x|-w^2_k(x),\ \ &on\ \partial^+B^+_{\lambda_k^{-1}}(\bar{x}_k) \cup \partial^+ B^+_{\frac{1}{4}\lambda_k^{-1}}(\bar{x}_k).
\end{cases}
\end{align*}
Extend $w^3_k$ evenly and note that
\begin{eqnarray*}
osc_{\partial^+B^+_{\lambda_k^{-1}}(\bar{x}_k) }(\tilde{v}_k+4\log|x|-w^2_k(x)) &\leq &osc_{\partial^+B^+}u_k(x)
+osc_{\partial^+B^+_{\lambda_k^{-1}}(\bar{x}_k) }\log|x|+osc_{\partial^+B^+_{\lambda_k^{-1}}(\bar{x}_k) }w^2_k(x)\\
&=&o(1)
\end{eqnarray*}
and
\begin{eqnarray*}
osc_{\partial^+B^+_{\frac{1}{4}\lambda_k^{-1}}(\bar{x}_k)}(\tilde{v}_k+4\log|x|-w^2_k(x))&\leq&  \big\|\tilde{v}_k(x)+4\log|x|\big\|_{C^0(\partial^+B^+_{\frac{1}{4}\lambda_k^{-1}}(\bar{x}_k))} +\|w^2_k(x)\|_{C^0(\partial^+B^+_{\frac{1}{4}\lambda_k^{-1}}(\bar{x}_k))}\\
&=&o(1),
\end{eqnarray*}
 where we have used the fact that $\big\|\tilde{v}_k(x)+4\log|x|\big\|_{C^0(\partial^+B^+_{\frac{1}{4}\lambda_k^{-1}}(\bar{x}_k))}=o(1)$, which follows from Proposition \ref{prop:01}.

 By maximal principle, we deduce $$osc_{B^+_{\lambda_k^{-1}}(\bar{x}_k) \setminus B^+_{\frac{1}{4}\lambda_k^{-1}}(\bar{x}_k)}w^3_k=o(1).$$

Combining  these together, we get \eqref{equ:18}.

Consequently, we complete  the proof.
\end{proof}

\


\providecommand{\bysame}{\leavevmode\hbox to3em{\hrulefill}\thinspace}
\providecommand{\MR}{\relax\ifhmode\unskip\space\fi MR }
\providecommand{\MRhref}[2]{%
  \href{http://www.ams.org/mathscinet-getitem?mr=#1}{#2}
}
\providecommand{\href}[2]{#2}

\end{document}